\newtheorem*{rep@theorem}{\rep@title}\newcommand{\newreptheorem}[2]{%
\newenvironment{rep#1}[1]{%
\def\rep@title{\bf #2 \ref{##1}}%
\begin{rep@theorem}}%
{\end{rep@theorem}}}
\newtheorem{theorem}{Theorem}[section]
\newtheorem{proposition}[theorem]{Proposition}
\newtheorem{corollary}[theorem]{Corollary}
\theoremstyle{definition}
\newtheorem{remark}[theorem]{Remark}
\renewcommand{\sl}{\mathfrak{sl}}
\newcommand{\Q}{\mathbb{Q}}
\newcommand{\R}{\mathbb{R}}
\newcommand{\Z}{\mathbb{Z}}
\newcommand{\C}{\mathbb{C}}
\DeclareMathOperator{\Hilb}{Hilb}
\DeclareMathOperator{\Char}{Char}
\DeclareMathOperator{\GL}{GL}
\DeclareMathOperator{\IrrChar}{IrrChar}
\DeclareMathOperator{\Cat}{Cat}
\DeclareMathOperator{\Sym}{Sym}
\begin{document}

\title{Universal series for dihedral group coinvariant rings}

\author[J. Lentfer]{John Lentfer}
\address{Department of Mathematics\\
         University of California, Berkeley, CA, USA}
\email{jlentfer@berkeley.edu}

%%%%%%%%%%%%%%%%%%%%%%%%%%%%%%%%%%%%%%%%%%%%%%%%%%%%%%%%%%%%%%%%%%%%%

\begin{abstract}
In 1994, Alfano determined a monomial basis, bigraded Hilbert series, and bigraded Frobenius series for the ring of diagonal dihedral group coinvariants $R^{(2,0)}_{\mathfrak{I}_{2}(n)}$. 
Using diagonal supersymmetry, we determine a universal multigraded character series, universal multigraded Hilbert series, and monomial basis for the generalization to any $k$ sets of bosonic variables and $j$ sets of fermionic variables $R^{(k,j)}_{\mathfrak{I}_{2}(n)}$.
\end{abstract}

\maketitle
%%%%%%%%%%%%%%%%%%%%%%%%%%%%%%%%%%%%%%%%%%%%%%%%%%%%%%%%%%%%%%%%%%%%%
%\tableofcontents

\section{Introduction}\label{sec:introduction}

The diagonal coinvariant ring $R_n^{(2,0)} = \C[\bm{x},\bm{y}]/\allowbreak \langle \C[\bm{x},\bm{y}]_+^{\mathfrak{S}_n} \rangle$ was studied by Haiman in 1994 \cite{Haiman1994}, and has been of much interest in algebraic combinatorics since. Here $\bm{x}$ and $\bm{y}$ respectively denote two sets of variables $\{x_1,\ldots,x_n\}$ and $\{y_1,\ldots,y_n\}$. The defining ideal $\langle \C[\bm{x},\bm{y}]_+^{\mathfrak{S}_n} \rangle$ is generated by all polynomials in $\C[\bm{x},\bm{y}]$ which are invariant under the diagonal action of $\mathfrak{S}_n$, with no constant term. In 1994, Haiman conjectured the dimension, bigraded Hilbert series, and Frobenius series of $R_n^{(2,0)}$, however these resisted proof until 2002, where he proved them using new results on the Hilbert scheme of points in the complex plane \cite{Haiman2002}.

In the interim, Alfano \cite{Alfano} studied the related ring $R^{(2,0)}_{\mathfrak{I}_{2}(n)} = \C[x_1, x_2, y_1, y_2]/\allowbreak \langle \C[x_1, x_2, y_1, y_2]_+^{\mathfrak{I}_2(n)} \rangle$, where $\mathfrak{I}_{2}(n)$ denotes the dihedral group of order $2n$. 
This ring is easier to study than $R_n^{(2,0)}$ since every dihedral group is of rank 2, while the rank of the symmetric group $\mathfrak{S}_n$ grows with $n$. Alfano found the dimension, bigraded Hilbert series, and bigraded character series of $R^{(2,0)}_{\mathfrak{I}_{2}(n)}$ using combinatorial constructions, linear algebra, and Lie algebra operators, without any new results in algebraic geometry. 

There has been recent interest (see \cite{BergeronOPAC, Bergeron2020, BHIR, Zabrocki2020}) in considering coinvariant rings with $k$ sets of commuting (bosonic) variables $\bm{x}^{(1)}, \ldots, \bm{x}^{(k)}$ and $j$ sets of anticommuting (fermionic) variables $\bm{\theta}^{(1)}, \ldots, \bm{\theta}^{(j)}$.
We denote this by 
\begin{equation} R_G^{(k,j)} = \C[\bm{x}^{(1)}, \ldots, \bm{x}^{(k)}, \bm{\theta}^{(1)}, \ldots, \bm{\theta}^{(j)}]/\langle \C[\bm{x}^{(1)}, \ldots, \bm{x}^{(k)}, \bm{\theta}^{(1)}, \ldots, \bm{\theta}^{(j)}]_+^{G} \rangle,\end{equation}
where $G \subseteq \GL(n)$ is a finite group acting diagonally on all sets of variables. In the case where $G = \mathfrak{S}_n$ acts by permuting variables, we write $R_n^{(k,j)} := R_{\mathfrak{S}_n}^{(k,j)}$.
Commuting variables commute with all variables. Anticommuting variables anticommute with all anticommuting variables, that is, $\theta_i^{(\ell)} \theta_j^{(m)} = - \theta_j^{(m)} \theta_i^{(\ell)}$, which implies that $(\theta_i^{(\ell)})^2 = 0$.
Note that $\C[\bm{x}^{(1)}, \ldots, \bm{x}^{(k)}, \bm{\theta}^{(1)}, \ldots, \bm{\theta}^{(j)}]$ comes from taking coordinates on $(\Sym \C^n)^{\otimes k} \otimes (\wedge \C^n)^{\otimes j}$.

In \cite{Lentfer-Supersymmetry}, it was shown that $R_G^{(k,j)}$ is a $\mathcal{U}(\mathfrak{gl}(k|j)) \otimes \C[G]$-module, where $\mathcal{U}(\mathfrak{gl}(k|j))$ is the universal enveloping algebra of the Lie superalgebra $\mathfrak{gl}(k|j)$. 
As a consequence, for fixed $n$ and $G \subseteq \GL(n)$, the multigraded character series of $R_{G}^{(k,j)}$ can be written in terms of universal coefficients $c_{\lambda,\mu}$ and super Schur functions as
\begin{equation}\label{eq:diagonal-supersymmetry}
    \Char(R_G^{(k,j)}; \mathbf{q};\mathbf{u}) =  \sum_{\lambda \in P(k,j,n)} \sum_{\chi^\mu \in \IrrChar(G)}c_{\lambda,\mu} s_\lambda(\mathbf{q}/\mathbf{u})\chi^\mu,
\end{equation}
where $P(k,j,n)$ denotes the set of partitions $\lambda$ with length $\ell(\lambda) \leq n$ which satisfy $\lambda_{k+1} \leq j$, and $\IrrChar(G)$ denotes the set of irreducible characters $\chi^\mu$ of $G$.
This is a general, but abstract result, since there is no known explicit formula for all of these coefficients.
In the important case of $G = \mathfrak{S}_n$ acting by permuting variables, the universal series is currently known explicitly only for $n \leq 6$ \cite[Section 2.3]{BergeronOPAC}.

The goal of this paper is to give explicit universal series coefficients and an explicit monomial basis when $G = \mathfrak{I}_{2}(n)$. Our main object of study is 
\begin{align}R^{(k,j)}_{\mathfrak{I}_{2}(n)} =&\ \C[x_1^{(1)}, x_2^{(1)}, x_1^{(2)}, x_2^{(2)}, \ldots, x_1^{(k)}, x_2^{(k)}, \theta_1^{(1)}, \theta_2^{(1)}, \theta_1^{(2)}, \theta_2^{(2)}, \ldots, \theta_1^{(j)}, \theta_2^{(j)}]\\
&/\langle \C[x_1^{(1)}, x_2^{(1)}, x_1^{(2)}, x_2^{(2)}, \ldots, x_1^{(k)}, x_2^{(k)}, \theta_1^{(1)}, \theta_2^{(1)}, \theta_1^{(2)}, \theta_2^{(2)}, \ldots, \theta_1^{(j)}, \theta_2^{(j)}]_+^{\mathfrak{I}_{2}(n)} \rangle,\nonumber\end{align}
where, as before, the defining ideal is generated by all polynomials which are invariant under the diagonal action of $\mathfrak{I}_{2}(n)$, with no constant term.

In Section~\ref{sec:background-preliminaries}, we review background. 
In the next three sections, we study $R^{(k,j)}_{\mathfrak{I}_{2}(n)}$ for arbitrary non-negative integers $(k,j)$, and prove our main results.
In Section~\ref{sec:character-series}, using diagonal supersymmetry and a theorem of Alfano, we derive an explicit multigraded character series for $R^{(k,j)}_{\mathfrak{I}_{2}(n)}$.
\begin{theorem}\label{thm:b-f-character}
Let $\mathbf{q} = (q_1,\ldots, q_k)$ and $\mathbf{u} = (u_1,\ldots, u_j)$. The $\mathbf{q},\mathbf{u}$-graded character series for $R^{(k,j)}_{\mathfrak{I}_{2}(n)}$ is given by
\begin{equation}
    \begin{aligned}
        \Char(R_{\mathfrak{I}_{2}(n)}^{(k,j)}; \mathbf{q};\mathbf{u}) = &\ \chi_1 + s_{(1,1)}(\mathbf{q}/\mathbf{u})\chi_2 + s_{(n)}(\mathbf{q}/\mathbf{u})\chi_2+ \sum_{i=1}^{\lfloor \frac{n-1}{2} \rfloor} (s_{(i)}(\mathbf{q}/\mathbf{u}) + s_{(n-i)}(\mathbf{q}/\mathbf{u})) \chi^i\\
        &+ \begin{cases}
            s_{(\frac{n}{2})}(\mathbf{q}/\mathbf{u})(\chi_3+\chi_4)& \text{ if $n$ even,}\\
        0& \text{ if $n$ odd.}
        \end{cases}
    \end{aligned}
\end{equation}
\end{theorem}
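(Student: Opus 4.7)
The plan is to apply the diagonal supersymmetry identity \eqref{eq:diagonal-supersymmetry} to reduce the problem to determining a finite list of universal coefficients $c_{\lambda,\mu}$, and then to pin these coefficients down by matching against Alfano's known formula in the case $(k, j) = (2, 0)$.

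Since $\mathfrak{I}_{2}(n) \subseteq \GL(2)$, equation \eqref{eq:diagonal-supersymmetry} specializes to
\[
\Char(R^{(k,j)}_{\mathfrak{I}_{2}(n)}; \mathbf{q}; \mathbf{u}) = \sum_{\lambda \in P(k,j,2)} \sum_{\chi^\mu \in \IrrChar(\mathfrak{I}_{2}(n))} c_{\lambda,\mu}\, s_\lambda(\mathbf{q}/\mathbf{u})\, \chi^\mu,
\]
where the sum runs over partitions $\lambda$ of length at most $2$, and the coefficients $c_{\lambda,\mu}$ are nonnegative integers depending only on $n$. Setting $(k,j) = (2,0)$ collapses the super Schur function $s_\lambda(\mathbf{q}/\mathbf{u})$ to the ordinary Schur polynomial $s_\lambda(q_1, q_2)$. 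On this side, Alfano's theorem provides an explicit bigraded character series $\Char(R^{(2,0)}_{\mathfrak{I}_{2}(n)}; q_1, q_2) = \sum_\mu P_\mu(q_1, q_2)\, \chi^\mu$, where each $P_\mu$ is a symmetric polynomial in $q_1, q_2$. Since $\{s_\lambda(q_1, q_2) : \ell(\lambda) \leq 2\}$ is a $\Z$-basis for symmetric polynomials in two variables, the Schur expansion of each $P_\mu$ is unique and reads off $c_{\lambda,\mu}$.

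Carrying out the Schur expansion is the bulk of the work. One expects $c_{\emptyset, 1} = 1$ from the one-dimensional space of invariants; for the sign character $\chi_2$, both $c_{(1,1), 2}$ and $c_{(n), 2}$ equal $1$; for each two-dimensional character $\chi^i$, both $c_{(i), i}$ and $c_{(n-i), i}$ equal $1$; and for even $n$, the remaining one-dimensional characters $\chi_3, \chi_4$ each contribute $c_{(n/2), \cdot} = 1$. The main obstacle is verifying these multiplicities, especially for the two-dimensional characters, where one must confirm that Alfano's polynomial attached to $\chi^i$ expands as precisely $s_{(i)}(q_1, q_2) + s_{(n-i)}(q_1, q_2)$ with no unwanted contributions from two-row partitions $(a,b)$ with $b \geq 2$. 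Once this finite check is complete, the universality of the $c_{\lambda,\mu}$ built into \eqref{eq:diagonal-supersymmetry} immediately promotes the identity from $(k, j) = (2, 0)$ to all $(k, j)$, yielding the stated character series.
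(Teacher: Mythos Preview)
Your proposal is correct and follows essentially the same route as the paper: invoke diagonal supersymmetry to reduce to universal coefficients $c_{\lambda,\mu}$ indexed by partitions with $\ell(\lambda)\le 2$, specialize to $(k,j)=(2,0)$, and read off the coefficients by expanding Alfano's bigraded character formula in the Schur basis of two-variable symmetric polynomials. The paper carries out that Schur expansion explicitly (each $\sum_{h=0}^{m} q^{m-h}t^{h}$ in Alfano's formula is literally $s_{(m)}(q,t)$, and the lone $qt$ term is $s_{(1,1)}(q,t)$), so the ``main obstacle'' you flag---ruling out contributions from two-row shapes $(a,b)$ with $b\ge 2$---is in fact immediate; otherwise the arguments coincide.
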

In Section~\ref{sec:hilbert-series}, as a consequence, we obtain an explicit multigraded Hilbert series for $R^{(k,j)}_{\mathfrak{I}_{2}(n)}$ (Theorem~\ref{thm:b-f-hilbert}), and determine the dimension of $R^{(k,j)}_{\mathfrak{I}_{2}(n)}$ (Corollary~\ref{cor:b-f-cardinality-second}).
In Section~\ref{sec:monomial-basis}, we derive a monomial basis for $R^{(k,j)}_{\mathfrak{I}_{2}(n)}$ (Theorem~\ref{thm:b-f-basis}) using straightening relations.

Section~\ref{sec:sign-character} briefly remarks on the sign character and the connection with Catalan numbers of type $I$. In Section~\ref{sec:discussion}, we discuss some numerological implications of these results for specializations of $(k,j)$ to certain small values.
Appendix~\ref{sec:cyclic} contains analogous results for the much simpler case of cyclic groups $\Z_n$.

\section{Preliminaries}\label{sec:background-preliminaries}

\subsection{The dihedral group}

The setting of the paper is $G = \mathfrak{I}_2(n)$, the dihedral group of order $2n$, and we require that $n \geq 2$.\footnote{Sometimes the case where $n=2$ is treated separately from $n > 2$ (for example, $\mathfrak{I}_2(2) \cong \mathfrak{S}_2 \times \mathfrak{S}_2$ is reducible).} 
$\mathfrak{I}_{2}(n)$ is the symmetry group of a regular $n$-gon in the real plane $\R^2$, and is generated by the Euclidean transformations $\rho$, rotation by $2\pi /n$ radians counterclockwise, and $\phi$, reflection about the $x$-axis. 
Matrices for these are given by:
\begin{equation}\label{eq:unitary_matrices}
    \rho = \begin{bmatrix}
        \cos \frac{2\pi}{n} & -\sin\frac{2\pi}{n} \\
        \sin \frac{2\pi}{n} & \cos\frac{2\pi}{n}
    \end{bmatrix} \text{ and }
    \phi = \begin{bmatrix}
        1 & 0 \\
        0 & -1
    \end{bmatrix}.
\end{equation}
Since these matrices are unitary matrices, we may take $\{x_{1}^{(1)},x_{2}^{(1)}\}$ to an orthonormal basis for $\R^2$. 
The action of $\mathfrak{I}_{2}(n)$ can be extended homomorphically to all $\R[\bm{x}^{(1)}]$. 
Since we will use the character theory of $\mathfrak{I}_{2}(n)$, and its irreducible real characters are not necessarily all irreducible over $\C$, we will work with complex characters.
So we complexify $\R^2$ to $\C^2$, and work over $\C$ throughout.
We remark that we could work more minimally over $\Q(\zeta_n)$, where $\zeta_n$ is a primitive $n$th root of unity.

Let $\Re(P)$ denote taking the real part of a complex polynomial $P$.
As our fundamental $\mathfrak{I}_2(n)$-invariants, we use $(x_1^{(1)})^2+(x_2^{(1)})^2$ and from \cite[equation (4.6)]{Alfano},
\begin{equation}\label{eq:real-expansion}
\Re(x_1^{(1)} + i x_2^{(1)})^n = \sum_{m=0}^{\lfloor \frac{n}{2} \rfloor} \binom{n}{2m} (-1)^m (x_{1}^{(1)})^{n-2m}(x_{2}^{(1)})^{2m},
\end{equation}
where here $ i =\sqrt{-1}$.

\subsection{Dihedral group coinvariants and harmonics}
Fix $V = \C^2$.
Note that $\mathfrak{I}_2(n) \subseteq \GL(V) = \GL(2)$. 
Let $\Sym V$ denote the symmetric algebra on $V$ over $\C$ and let $\wedge V$ denote the exterior algebra on $V$ over $\C$.

Here and throughout, let 
\begin{equation}\C[\bm{x}^{(1)}, \ldots, \bm{x}^{(k)}, \bm{\theta}^{(1)}, \ldots, \bm{\theta}^{(j)}]\end{equation} 
denote the polynomial ring $\C[x_1^{(1)}, x_2^{(1)}, \ldots, x_1^{(k)}, x_2^{(k)}, \theta_1^{(1)}, \theta_2^{(1)}, \ldots, \theta_1^{(j)}, \theta_2^{(j)}]$.
The diagonal action (in $k$ sets of bosonic variables and $j$ sets of fermionic variables) of $\mathfrak{I}_{2}(n)$ on $\C[\bm{x}^{(1)}, \ldots, \bm{x}^{(k)},\bm{\theta}^{(1)}, \ldots, \bm{\theta}^{(j)}]$ is defined for any $g \in \mathfrak{I}_{2}(n)$ and polynomial $P(x_1^{(1)}, x_2^{(1)}, \ldots, x_1^{(k)}, x_2^{(k)}, \theta_1^{(1)}, \theta_2^{(1)}, \ldots, \theta_1^{(j)}, \theta_2^{(j)}) \in \C[\bm{x}^{(1)}, \ldots, \bm{x}^{(k)}, \bm{\theta}^{(1)}, \ldots, \bm{\theta}^{(j)}]$ by
\begin{equation}
\begin{aligned} &g \cdot P(x_1^{(1)}, x_2^{(1)}, \ldots, x_1^{(k)}, x_2^{(k)}, \theta_1^{(1)}, \theta_2^{(1)}, \ldots, \theta_1^{(j)}, \theta_2^{(j)})\\ 
&= P(g \cdot x_1^{(1)}, g \cdot x_2^{(1)}, \ldots, g \cdot x_1^{(k)}, g \cdot x_2^{(k)}, g \cdot \theta_1^{(1)}, g \cdot \theta_2^{(1)}, \ldots, g \cdot \theta_1^{(j)}, g \cdot \theta_2^{(j)}).
\end{aligned}
\end{equation}
Note that matrices for this action are generated by block diagonal matrices with $k+j$ blocks of the matrices for $\rho$ and $\phi$ given in equation~(\ref{eq:unitary_matrices}).

Using a certain form where multiplication is adjoint to differentiation, one can define a space of diagonal harmonics in $k$ sets of bosonic variables and $j$ sets of fermionic variables (see \cite[Section 2]{SwansonWallach1} and \cite[Section 2]{SwansonWallach2}).
This harmonic space is isomorphic to the coinvariant ring $R_{\mathfrak{I}_2(n)}^{(k,j)}$ as a multigraded $\mathfrak{I}_2(n)$-module.

\subsection{Dihedral group characters}

We recall the dihedral group irreducible characters (see for example \cite[Chapter 18.3]{JamesLiebeck}). 
We use the group presentation $\mathfrak{I}_{2}(n) = \langle \rho, \phi \ |\ \rho^n = \phi^2 = 1, \phi^{-1}\rho\phi = \rho^{-1}\rangle$. 
The 1-dimensional irreducible characters are:
\begin{itemize}
    \item $\chi_1$ defined by $\chi_1(\rho) = 1$ and $\chi_1(\phi) = 1$,
    \item $\chi_2$ defined by $\chi_2(\rho) = -1$ and $\chi_2(\phi) = 1$,
\end{itemize}
and if and only if $n$ is even, 
\begin{itemize}
    \item $\chi_3$ defined by $\chi_3(\rho) = 1$ and $\chi_3(\phi) = -1$,
    \item $\chi_4$ defined by $\chi_4(\rho) = -1$ and $\chi_4(\phi) = -1$.
\end{itemize}
The 2-dimensional irreducible characters are, for $1 \leq h \leq \lfloor \frac{n-1}{2} \rfloor$:
\begin{itemize}
    \item $\chi^h$ defined by $\chi^h(\rho) = 2\cos(\frac{2h\pi}{n})$ and $\chi^h(\phi) = 0$.
\end{itemize}
These constitute the complete list of irreducible characters.

\begin{remark}\label{rmk:extra-characters}
The definition of the 2-dimensional characters may be extended to for $1 \leq h \leq n-1$:
\begin{itemize}
    \item $\chi^h$ defined by $\chi^h(\rho) = 2\cos(\frac{2h\pi}{n})$ and $\chi^h(\phi) = 0$.
\end{itemize} 
Then $\chi^i$ and $\chi^{n-i}$ are isomorphic, and when $n$ is even, the character $\chi^{\frac{n}{2}}$ is reducible with $\chi^{\frac{n}{2}} = \chi_3+\chi_4$.
\end{remark}

\subsection{Dihedral group polarization operators}

Define polarization operators for $\ell \in \{1,\ldots, k-1\}$ by
\begin{equation}\label{eq:bosonic-E} E_\ell  := \sum_{i=1}^2 x_{i}^{(\ell+1)} \partial_{x_{i}^{(\ell)}},\end{equation}
for $k$,
\begin{equation}\label{eq:bosonic-fermionic-E} E_k := \sum_{i=1}^2 \theta_{i}^{(1)} \partial_{x_{i}^{(k)}},\end{equation}
and for $\ell \in \{k+1,\ldots, k+j-1\}$,
\begin{equation}\label{eq:fermionic-E} E_\ell := \sum_{i=1}^2 \theta_{i}^{(\ell-k+1)} \partial_{\theta_{i}^{(\ell-k)}},\end{equation}
which are all diagonal invariant under the action of $\mathfrak{I}_{2}(n)$; this is shown for $E_1$ in \cite[Section IV.C]{Alfano}, and the same argument holds for the others. 

The operators in equations~(\ref{eq:bosonic-E}-\ref{eq:fermionic-E}) generate the Lie superalgebra $\mathfrak{sl}(k|j)$ \cite[Section 5.2.1]{ChengWangBook}.

\subsection{Super Schur functions}

A super Schur function $s_\lambda(\mathbf{q}/\mathbf{u})$ is defined by 
\begin{equation}
    s_\lambda(\mathbf{q}/\mathbf{u}) = \sum_{\nu \subseteq \lambda} s_\nu(\mathbf{q})s_{\lambda'/\nu'}(\mathbf{u}),
\end{equation}
where $\lambda'$ denotes the transpose of the partition $\lambda$ (see for example \cite[Sections A.2.2 and 3.2]{ChengWangBook}).

A special case that we use often is
\begin{equation}
    s_{(n)}(\mathbf{q}/\mathbf{u}) = \sum_{i=0}^n s_{(n-i)}(\mathbf{q})s_{(1^i)}(\mathbf{u}),
\end{equation}
where $(0)$ means the empty partition $\varnothing$.

\subsection{Diagonal supersymmetry}

We recall the ``diagonal supersymmetry'' theorem, which was previewed in equation~(\ref{eq:diagonal-supersymmetry}).
\begin{theorem}[\!\!{\cite[Theorem 1.2]{Lentfer-Supersymmetry}}]\label{thm:G-main-theorem}
    Fix a positive integer $n$ and a finite group $G \subset \GL(n)$. 
    For partitions $\lambda$ with $\ell(\lambda) \leq n$, and indices $\mu$ of irreducible $G$-characters, there exist nonnegative integer coefficients $c_{\lambda,\mu}$ such that for any $(k,j)$, the multigraded character series of $R_G^{(k,j)}$ is
\begin{equation}\label{eq:G-theorem}
    \Char(R_G^{(k,j)}; \mathbf{q};\mathbf{u}) =  \sum_{\lambda \in P(k,j,n)} \sum_{\chi^\mu \in \IrrChar(G)}c_{\lambda,\mu} s_\lambda(\mathbf{q}/\mathbf{u})\chi^\mu.
\end{equation}
\end{theorem}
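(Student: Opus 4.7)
The plan is to produce commuting actions of $\mathcal{U}(\mathfrak{gl}(k|j))$ and $\C[G]$ on $R_G^{(k,j)}$ via polarization operators, apply super Howe duality to decompose as a bimodule into irreducibles, and then use a specialization argument to establish universality of the coefficients.

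First, I would generalize the polarization operators \eqref{eq:bosonic-E}--\eqref{eq:fermionic-E} from the dihedral case to any $n$, and to the full Chevalley set (raising, lowering, and Cartan generators): each is of the form $\sum_{i=1}^{n} \zeta_i\,\partial_{\eta_i}$ for appropriate pairs $(\zeta,\eta)$ of bosonic or fermionic variable sets. Such contraction operators are invariant under the simultaneous change of basis of $\C^n$ that any $g \in G \subseteq \GL(n)$ induces on every variable set, so they commute with the diagonal $G$-action. A direct (anti)commutator computation checks that they satisfy the defining relations of $\mathfrak{gl}(k|j)$. Being $G$-invariant, these operators preserve the defining ideal $\langle \C[\bm{x}^{(1)},\ldots,\bm{\theta}^{(j)}]_+^G \rangle$, so they descend to $R_G^{(k,j)}$ and endow it with the desired $\mathcal{U}(\mathfrak{gl}(k|j)) \otimes \C[G]$-module structure with commuting actions.

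Second, I would invoke super Howe duality for the $(\mathfrak{gl}(k|j),\GL(n))$ dual pair \cite[Chapter 5]{ChengWangBook}: as a $\mathfrak{gl}(k|j) \times \GL(n)$-module,
$$\C[\bm{x}^{(1)},\ldots,\bm{\theta}^{(j)}] \;\cong\; \bigoplus_{\lambda \in P(k,j,n)} W_\lambda \otimes U_\lambda,$$
where $W_\lambda$ is the irreducible polynomial $\mathfrak{gl}(k|j)$-module whose torus character is $s_\lambda(\mathbf{q}/\mathbf{u})$ and $U_\lambda$ is the irreducible $\GL(n)$-module indexed by $\lambda$. The defining ideal is respected by both actions in this decomposition, so passing to the quotient yields
$$R_G^{(k,j)} \;\cong\; \bigoplus_{\lambda \in P(k,j,n)} W_\lambda \otimes M_\lambda$$
for some finite-dimensional $G$-modules $M_\lambda$. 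Writing $M_\lambda = \bigoplus_\mu c_{\lambda,\mu}\, V^\mu$ as a sum of $G$-irreducibles with multiplicities $c_{\lambda,\mu} \in \Z_{\geq 0}$ and extracting the multigraded character produces the formula \eqref{eq:G-theorem}.

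The main obstacle is establishing that $c_{\lambda,\mu}$ is genuinely universal, i.e., independent of $(k,j)$. For this, observe that for $(k',j') \leq (k,j)$ the specialization $\bm{x}^{(k'+1)},\ldots,\bm{\theta}^{(j)} \mapsto 0$ induces a $G$-equivariant surjection $R_G^{(k,j)} \twoheadrightarrow R_G^{(k',j')}$ whose kernel is the ideal generated by the extra variables; on multigraded characters this corresponds to setting $q_{k'+1} = \cdots = u_j = 0$. Since the super Schur function $s_\lambda(\mathbf{q}'/\mathbf{u}')$ vanishes precisely when $\lambda \notin P(k',j',n)$ (among $\lambda$ with $\ell(\lambda) \leq n$), and since the remaining set $\{s_\lambda(\mathbf{q}'/\mathbf{u}')\chi^\mu : \lambda \in P(k',j',n),\, \mu \in \IrrChar(G)\}$ is linearly independent, the identity $\Char(R_G^{(k,j)})|_{q_{k'+1}=\cdots=u_j=0} = \Char(R_G^{(k',j')}; \mathbf{q}'; \mathbf{u}')$ forces $c_{\lambda,\mu}^{(k,j)} = c_{\lambda,\mu}^{(k',j')}$ whenever $\lambda \in P(k',j',n)$. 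Hence any sufficiently large $(k,j)$ defines the same universal coefficient $c_{\lambda,\mu}$, completing the proof.
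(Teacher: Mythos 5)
This theorem is not proved in the present paper at all---it is imported verbatim from \cite{Lentfer-Supersymmetry}---so there is no internal proof to compare against; your reconstruction (contraction/polarization operators giving commuting, ideal-preserving actions of $\mathcal{U}(\mathfrak{gl}(k|j))$ and $\C[G]$, the $(\mathfrak{gl}(k|j),\GL(n))$ Howe decomposition of the polynomial superalgebra passing to the quotient to give $\bigoplus_{\lambda} W_\lambda \otimes M_\lambda$ with super Schur characters, and universality of $c_{\lambda,\mu}$ via setting extra variable sets to zero together with linear independence of the surviving hook Schur functions) is sound and matches the strategy this paper attributes to that reference via its Sections on polarization operators, super Schur functions, and diagonal supersymmetry. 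The only steps you should justify explicitly are the semisimplicity of the category of polynomial $\mathfrak{gl}(k|j)$-modules (Berele--Regev/Sergeev, see \cite{ChengWangBook}), which is what lets you split off the multiplicity spaces $M_\lambda$ degree by degree and identify them as $G$-modules, and the standard fact that $s_\lambda(\mathbf{q}'/\mathbf{u}')$ vanishes under the specialization exactly when $\lambda \notin P(k',j',n)$; with those in place your argument is complete, and note that your stabilization step can be phrased more simply by observing that $(k,j)=(n,0)$ already realizes every $\lambda$ with $\ell(\lambda)\leq n$.
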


By specializing to dihedral groups, the multigraded character series of $R^{(k,j)}_{\mathfrak{I}_{2}(n)}$ is a sum of super Schur functions times $\mathfrak{I}_2(n)$-characters.
\begin{theorem}\label{thm:G-main-theorem-dihedral}
    Fix an integer $n \geq 2$. 
    For partitions $\lambda$ with $\ell(\lambda) \leq 2$, and irreducible $\mathfrak{I}_2(n)$-characters $\chi^\mu$, there exist nonnegative integer coefficients $c_{\lambda,\mu}$ such that for any $(k,j)$, the multigraded character series of $R_{\mathfrak{I}_2(n)}^{(k,j)}$ is
\begin{equation}\label{eq:diagonal-supersymmetry_I}
    \Char(R_{\mathfrak{I}_2(n)}^{(k,j)}; \mathbf{q};\mathbf{u}) =  \sum_{\lambda \in P(k,j,2)} \sum_{\chi^\mu \in \IrrChar(\mathfrak{I}_2(n))}c_{\lambda,\mu} s_\lambda(\mathbf{q}/\mathbf{u})\chi^\mu.
\end{equation}
\end{theorem}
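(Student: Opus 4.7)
The plan is straightforward: Theorem~\ref{thm:G-main-theorem-dihedral} is an immediate specialization of Theorem~\ref{thm:G-main-theorem} to the case $G = \mathfrak{I}_2(n) \subset \GL(2)$. The key observation is that the integer $n$ appearing in the statement of Theorem~\ref{thm:G-main-theorem} refers to the ambient dimension (i.e.\ the size of the matrices), whereas the $n$ appearing inside the symbol $\mathfrak{I}_2(n)$ refers to the rotation parameter of the dihedral group. Since dihedral groups act naturally on $\C^2$ through the matrices in equation~(\ref{eq:unitary_matrices}), the ambient dimension in our setting is $2$, and this is what forces the partition index set to be $P(k,j,2)$ rather than $P(k,j,n)$.

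First I would verify the hypotheses of Theorem~\ref{thm:G-main-theorem}: namely that $\mathfrak{I}_2(n)$ is a finite subgroup of $\GL(2)$. This is immediate from its realization as the isometry group of the regular $n$-gon, of order $2n$, with explicit generators as in~(\ref{eq:unitary_matrices}). Next I would substitute this data into equation~(\ref{eq:G-theorem}): the constraint $\ell(\lambda) \leq n$ from the general statement becomes $\ell(\lambda) \leq 2$ in our situation, so the index set $P(k,j,n)$ of the general theorem collapses to $P(k,j,2)$. Finally I would identify $\IrrChar(\mathfrak{I}_2(n))$ with the complete list of characters recalled in Section~\ref{sec:background-preliminaries}: the one-dimensional characters $\chi_1, \chi_2$ (together with $\chi_3, \chi_4$ when $n$ is even) and the two-dimensional characters $\chi^h$ for $1 \leq h \leq \lfloor (n-1)/2 \rfloor$.

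There is essentially no mathematical obstacle here; the only thing requiring care is the notational collision between the two roles of $n$, which is precisely why the statement writes $P(k,j,2)$. Once the substitutions above are made, equation~(\ref{eq:G-theorem}) becomes equation~(\ref{eq:diagonal-supersymmetry_I}), completing the proof. The real content of the paper will then be to determine the coefficients $c_{\lambda,\mu}$ explicitly, which is carried out in the subsequent sections.
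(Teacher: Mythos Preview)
Your proposal is correct and matches the paper's approach exactly: the paper states Theorem~\ref{thm:G-main-theorem-dihedral} immediately after Theorem~\ref{thm:G-main-theorem} with only the sentence ``By specializing to dihedral groups\ldots'' as justification, and your careful unpacking of the notational collision between the dihedral parameter $n$ and the ambient dimension $2$ is precisely the point that needs to be made.
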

In particular, the condition $\lambda \in P(k,j,2)$ means that $\ell(\lambda) \leq 2$ and $\lambda_{k+1} \leq j$.

\section{The multigraded character series}\label{sec:character-series}

In this section, we derive a multigraded character series for $R^{(k,j)}_{\mathfrak{I}_{2}(n)}$. 
Alfano \cite[equation (4.18)]{Alfano} recorded the singly graded character of $R^{(1,0)}_{\mathfrak{I}_{2}(n)}$ as the following graded version of the regular representation:
\begin{equation}
    \Char(R_{\mathfrak{I}_{2}(n)}^{(1,0)}; q) = \chi_1 + q^n \chi_2 + \sum_{i=1}^{\lfloor \frac{n-1}{2} \rfloor} (q^i + q^{n-i})\chi^i + \begin{cases}
        q^{\frac{n}{2}}(\chi_3 + \chi_4)& \text{ if $n$ even,}\\
        0& \text{ if $n$ odd.}
    \end{cases}
\end{equation}

Alfano \cite[Chapter IV.C]{Alfano} applied the polarization operators $E_1^0, E_1^1, \ldots, E_1^i$ to any bigraded component $(R^{(2,0)}_{\mathfrak{I}_{2}(n)})_{i,0}$ to generate an $\sl(2)$-string of modules $(R^{(2,0)}_{\mathfrak{I}_{2}(n)})_{i,0}, (R^{(2,0)}_{\mathfrak{I}_{2}(n)})_{i-1,1}, \ldots, (R^{(2,0)}_{\mathfrak{I}_{2}(n)})_{0,i}$. 
By looking at dimensions, the only bidegree not fully accounted for by this process is $(1,1)$. 
The extra basis element in bidegree $(1,1)$ spans a 1-dimensional module with bigraded character $qt\chi_2$. 
Thus Alfano concluded the following.
\begin{theorem}[\!\!{\cite[equation (4.19)]{Alfano}}]
The bigraded character of $R^{(2,0)}_{\mathfrak{I}_{2}(n)}$ is given by
\begin{equation}\label{eq:alfano-(2,0)}
\begin{aligned}
    \Char(R_{\mathfrak{I}_{2}(n)}^{(2,0)}; q,t) =&\ \chi_1 + qt\chi_2 + \sum_{i=0}^n q^{n-i}t^i \chi_2 + \sum_{i=1}^{\lfloor \frac{n-1}{2} \rfloor} \sum_{h=0}^i q^{i-h}t^h \chi^i + \sum_{i=1}^{\lfloor \frac{n-1}{2} \rfloor} \sum_{h=0}^{n-i} q^{n-i-h}t^h \chi^i \\
    &+ \begin{cases}
        \sum_{i=0}^{\frac{n}{2}}q^{\frac{n}{2}-i}t^i(\chi_3 + \chi_4)& \text{ if $n$ even,}\\
        0& \text{ if $n$ odd.}
    \end{cases}
\end{aligned}
\end{equation}
\end{theorem}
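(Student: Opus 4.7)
The plan is to exploit the $\mathfrak{sl}(2)$-action on $R^{(2,0)}_{\mathfrak{I}_2(n)}$ generated by the polarization operator $E_1 = x_1^{(2)}\partial_{x_1^{(1)}} + x_2^{(2)}\partial_{x_2^{(1)}}$, its adjoint $F_1 = x_1^{(1)}\partial_{x_1^{(2)}} + x_2^{(1)}\partial_{x_2^{(2)}}$, and their commutator $H = [E_1,F_1]$, and to bootstrap from the singly graded character of $R^{(1,0)}_{\mathfrak{I}_{2}(n)}$ displayed above. Because each of $E_1, F_1, H$ is $\mathfrak{I}_2(n)$-invariant, this $\mathfrak{sl}(2)$-action commutes with the $\mathfrak{I}_2(n)$-action, and $R^{(2,0)}_{\mathfrak{I}_2(n)}$ splits as an $\mathfrak{sl}(2) \times \mathfrak{I}_2(n)$-module.

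First I would observe that $F_1$ vanishes on $(R^{(2,0)}_{\mathfrak{I}_2(n)})_{i,0}$, since that bidegree contains no $x^{(2)}$ variables, and a direct computation shows that $H$ acts on bidegree $(a,b)$ as multiplication by $b-a$. Hence every vector in bidegree $(i,0)$ is an $\mathfrak{sl}(2)$-lowest weight vector of weight $-i$, and each irreducible $\mathfrak{I}_2(n)$-submodule $V \subseteq (R^{(2,0)}_{\mathfrak{I}_2(n)})_{i,0}$ generates an $\mathfrak{sl}(2)$-string
\begin{equation*}
V,\ E_1(V),\ E_1^2(V),\ \ldots,\ E_1^i(V)
\end{equation*}
sitting in bidegrees $(i,0),(i-1,1),\ldots,(0,i)$, with each piece carrying the same $\mathfrak{I}_2(n)$-character as $V$. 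Substituting Alfano's singly graded formula into $\sum_{i}\bigl(\sum_{h=0}^{i}q^{i-h}t^{h}\bigr)\,\Char(R^{(1,0)}_{\mathfrak{I}_2(n)})_i$ reproduces exactly every term of the claimed bigraded formula \emph{except} the anomalous $qt\chi_2$ in bidegree $(1,1)$.

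To pin down this discrepancy I would compute $\dim (R^{(2,0)}_{\mathfrak{I}_2(n)})_{1,1}$ independently --- for example via the bigraded Molien series for the diagonal action of $\mathfrak{I}_2(n)$ on $\C^2 \oplus \C^2$, or by producing an explicit monomial basis for $R^{(2,0)}_{\mathfrak{I}_2(n)}$. This shows that the $\mathfrak{sl}(2)$-strings saturate every bidegree except $(1,1)$, where they are short by exactly one basis element. That element is killed by both $E_1$ and $F_1$, so it spans a trivial (weight $0$) one-dimensional $\mathfrak{sl}(2)$-submodule and contributes $qt$ times a single one-dimensional $\mathfrak{I}_2(n)$-character. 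A short direct calculation of how $\rho$ and $\phi$ act on a convenient representative of this extra class identifies its $\mathfrak{I}_2(n)$-character as $\chi_2$, yielding the missing $qt\chi_2$ summand.

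The main obstacle is the independent dimension count confirming that the deficit is exactly a single irreducible in bidegree $(1,1)$: the $\mathfrak{sl}(2)$-string argument by itself only gives a lower bound on bigraded dimensions, and to conclude equality and isolate one extra class one needs an outside upper bound from a Hilbert series or explicit basis computation. Once that is secured, the character of the extra element is a routine computation and the theorem follows.
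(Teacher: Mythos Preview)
Your proposal is correct and follows essentially the same approach that the paper attributes to Alfano: generate $\mathfrak{sl}(2)$-strings from the bidegree-$(i,0)$ components via the polarization operator $E_1$, compare against an independent dimension count to find that only bidegree $(1,1)$ has a one-dimensional deficit, and then identify that extra one-dimensional $\mathfrak{I}_2(n)$-module as carrying the character $\chi_2$. Your write-up is somewhat more explicit about the $\mathfrak{sl}(2)$ structure (lowest weight vectors, the role of $F_1$ and $H$) than the paper's summary, but the argument is the same.
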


We are ready to prove our first main result.

\begin{reptheorem}{thm:b-f-character}
Let $\mathbf{q} = (q_1,\ldots, q_k)$ and $\mathbf{u} = (u_1,\ldots, u_j)$. The $\mathbf{q},\mathbf{u}$-graded character series for $R^{(k,j)}_{\mathfrak{I}_{2}(n)}$ is given by
\begin{equation}
    \begin{aligned}
        \Char(R_{\mathfrak{I}_{2}(n)}^{(k,j)}; \mathbf{q};\mathbf{u}) = &\ \chi_1 + s_{(1,1)}(\mathbf{q}/\mathbf{u})\chi_2 + s_{(n)}(\mathbf{q}/\mathbf{u})\chi_2+ \sum_{i=1}^{\lfloor \frac{n-1}{2} \rfloor} (s_{(i)}(\mathbf{q}/\mathbf{u}) + s_{(n-i)}(\mathbf{q}/\mathbf{u})) \chi^i\\
        &+ \begin{cases}
            s_{(\frac{n}{2})}(\mathbf{q}/\mathbf{u})(\chi_3+\chi_4)& \text{ if $n$ even,}\\
        0& \text{ if $n$ odd.}
        \end{cases}
    \end{aligned}
\end{equation}
\end{reptheorem}

\begin{remark}
Using the notation from Remark~\ref{rmk:extra-characters}, we can write the character series more succinctly as
\begin{align}
        \Char(R_{\mathfrak{I}_{2}(n)}^{(k,j)}; \mathbf{q};\mathbf{u}) = &\ \chi_1 + s_{(1,1)}(\mathbf{q}/\mathbf{u})\chi_2 + s_{(n)}(\mathbf{q}/\mathbf{u})\chi_2+ \sum_{i=1}^{n-1} s_{(i)}(\mathbf{q}/\mathbf{u}) \chi^i.
    \end{align}
\end{remark}

\begin{proof}[Proof of Theorem~\ref{thm:b-f-character}]
First observe that equation~(\ref{eq:alfano-(2,0)}) can be rewritten in terms of Schur functions (which are a special case of super Schur functions) as
\begin{equation}\label{eq:new-super}
    \begin{aligned}
        \Char(R_{\mathfrak{I}_{2}(n)}^{(2,0)}; q,t) = &\ \chi_1 + s_{(1,1)}(q,t)\chi_2 + s_{(n)}(q,t)\chi_2+ \sum_{i=1}^{\lfloor \frac{n-1}{2} \rfloor} (s_{(i)}(q,t) + s_{(n-i)}(q,t)) \chi^i\\
        &+ \begin{cases}
            s_{(\frac{n}{2})}(q,t)(\chi_3+\chi_4)& \text{ if $n$ even,}\\
        0& \text{ if $n$ odd.}
        \end{cases}
    \end{aligned}
\end{equation}
This determines the coefficients $c_{\lambda,\mu}$ in equation~(\ref{eq:diagonal-supersymmetry_I}) for all $\lambda \in P(2,0,2)$, i.e., for all $\lambda$ which satisfy $\ell(\lambda) \leq 2$, and for all $\chi^\mu$.
Recall that we require $n \geq 2$.
Explicitly, for $\ell(\lambda) \leq 2$ we have:
\begin{itemize}
    \item if $\lambda = \varnothing$ and $\chi^\mu = \chi_1$, then $c_{\lambda, \mu}=1$;
    \item if $\lambda = (1,1)$ and $\chi^\mu = \chi_2$, then $c_{\lambda, \mu}=1$;
    \item if $\lambda = (n)$ and $\chi^\mu = \chi_2$, then $c_{\lambda, \mu}=1$;
    \item if $n$ is even, if $\lambda = (\frac{n}{2})$ and $\chi^\mu = \chi_3$, then $c_{\lambda, \mu}=1$;
    \item if $n$ is even, if $\lambda = (\frac{n}{2})$ and $\chi^\mu = \chi_4$, then $c_{\lambda, \mu}=1$;
    \item if $\lambda = (i)$ for $i \in \{1,\ldots,\lfloor \frac{n-1}{2} \rfloor\}$ and $\chi^\mu = \chi^i$, then $c_{\lambda, \mu}=1$;
    \item if $\lambda = (n-i)$ for $i \in \{1,\ldots,\lfloor \frac{n-1}{2} \rfloor\}$ and $\chi^\mu = \chi^i$, then $c_{\lambda, \mu}=1$;
    \item otherwise $c_{\lambda, \mu}=0$.
\end{itemize}
The question which remains is if we have found all of the coefficients $c_{\lambda,\mu}$ which appear in the universal series in equation~(\ref{eq:diagonal-supersymmetry_I}).
The condition $\lambda \in P(k,j,2)$ means $\lambda$ such that $\ell(\lambda) \leq 2$ and $\lambda_{k+1} \leq j$, so since we already determined all $c_{\lambda,\mu}$ for all $\lambda$ which satisfy $\ell(\lambda) \leq 2$, there are no additional coefficients left to determine.
Hence expanding equation~(\ref{eq:diagonal-supersymmetry_I}) with the coefficients we have determined completes the proof.
\end{proof}

\section{The multigraded Hilbert series}\label{sec:hilbert-series}

In this section, we derive the Hilbert series of $R^{(k,j)}_{\mathfrak{I}_{2}(n)}$. 
The following theorem was first shown by Alfano \cite[equation (4.17)]{Alfano} for $k=2$ and $j=0$. It was then generalized by Bergeron {\cite[Theorem 2.3 and equation (30)]{Bergeron2013}} to arbitrary $k$ with $j=0$.

\begin{theorem}\label{thm:b-f-hilbert}
 Let $\mathbf{q} = (q_1,\ldots, q_k)$ and $\mathbf{u} = (u_1,\ldots, u_j)$. The $\mathbf{q},\mathbf{u}$-graded Hilbert series for $R^{(k,j)}_{\mathfrak{I}_{2}(n)}$ is given by
    \begin{equation}
    \Hilb(R_{\mathfrak{I}_{2}(n)}^{(k,j)}; \mathbf{q};\mathbf{u})
 = 1 + s_{(1,1)}(\mathbf{q}/\mathbf{u}) + s_{(n)}(\mathbf{q}/\mathbf{u}) + 2\sum_{i=1}^{n-1} s_{(i)}(\mathbf{q}/\mathbf{u}).
    \end{equation}
\end{theorem}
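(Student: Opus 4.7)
The plan is to derive the Hilbert series directly from the character series in Theorem~\ref{thm:b-f-character} by specializing to the identity element of $\mathfrak{I}_2(n)$. Recall that for any finite-dimensional $\mathfrak{I}_2(n)$-module $M$ with character $\chi_M$, one has $\dim M = \chi_M(1)$. Since the multigraded character series of $R^{(k,j)}_{\mathfrak{I}_{2}(n)}$ encodes the character of each multigraded piece, evaluating the character expression at $1 \in \mathfrak{I}_2(n)$ yields the multigraded Hilbert series.

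Concretely, I would use the values $\chi_1(1)=\chi_2(1)=\chi_3(1)=\chi_4(1)=1$ and $\chi^i(1)=2$ for each 2-dimensional character $\chi^i$. Substituting these dimensions into the expression in Theorem~\ref{thm:b-f-character} gives
\begin{equation*}
\Hilb(R_{\mathfrak{I}_{2}(n)}^{(k,j)}; \mathbf{q};\mathbf{u}) = 1 + s_{(1,1)}(\mathbf{q}/\mathbf{u}) + s_{(n)}(\mathbf{q}/\mathbf{u}) + 2\sum_{i=1}^{\lfloor (n-1)/2 \rfloor}\bigl(s_{(i)}(\mathbf{q}/\mathbf{u}) + s_{(n-i)}(\mathbf{q}/\mathbf{u})\bigr) + \varepsilon_n,
\end{equation*}
where $\varepsilon_n = 2\, s_{(n/2)}(\mathbf{q}/\mathbf{u})$ if $n$ is even and $\varepsilon_n = 0$ otherwise (the factor $2$ coming from the two 1-dimensional characters $\chi_3,\chi_4$).

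The only step requiring verification is the reindexing needed to combine the two sums into the single sum $2\sum_{i=1}^{n-1} s_{(i)}(\mathbf{q}/\mathbf{u})$ that appears in the claimed formula. If $n=2m+1$ is odd, then $\lfloor (n-1)/2 \rfloor = m$ and the substitution $i \mapsto n-i$ in the second inner sum converts it to $2\sum_{i=m+1}^{n-1} s_{(i)}(\mathbf{q}/\mathbf{u})$, which together with the first sum gives exactly $2\sum_{i=1}^{n-1} s_{(i)}(\mathbf{q}/\mathbf{u})$. If $n=2m$ is even, then $\lfloor (n-1)/2 \rfloor = m-1$ and the same reindexing yields $2\sum_{i=1}^{m-1} s_{(i)}(\mathbf{q}/\mathbf{u}) + 2\sum_{i=m+1}^{n-1} s_{(i)}(\mathbf{q}/\mathbf{u})$, to which the contribution $\varepsilon_n = 2\, s_{(m)}(\mathbf{q}/\mathbf{u})$ supplies the missing middle term, again producing $2\sum_{i=1}^{n-1} s_{(i)}(\mathbf{q}/\mathbf{u})$.

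There is no real obstacle here: the proof is essentially a one-line consequence of Theorem~\ref{thm:b-f-character} together with the observation that the universal dimension-$2$ relation $\chi^{n/2} = \chi_3 + \chi_4$ from Remark~\ref{rmk:extra-characters} makes the parity-dependent cases collapse uniformly. In fact, working from the compact form of the character series stated in the remark following Theorem~\ref{thm:b-f-character} removes even this small bookkeeping, since evaluating $\sum_{i=1}^{n-1} s_{(i)}(\mathbf{q}/\mathbf{u})\chi^i$ at the identity immediately yields the coefficient $2$ on the final sum.
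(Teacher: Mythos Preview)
Your proposal is correct and follows exactly the paper's approach: the paper's proof is the single sentence that the result follows immediately from Theorem~\ref{thm:b-f-character} together with the dimensions $\dim\chi_1=\dim\chi_2=\dim\chi_3=\dim\chi_4=1$ and $\dim\chi^i=2$. Your additional reindexing verification (and the observation about the compact form from Remark~\ref{rmk:extra-characters}) simply spells out the bookkeeping that the paper leaves implicit.
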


\begin{proof}
This follows immediately from Theorem~\ref{thm:b-f-character} and the fact that the characters $\chi_1,\chi_2,\chi_3,\chi_4$ are 1-dimensional, and the characters $\chi^i$ are 2-dimensional.
\end{proof}

By specializing the Hilbert series at $q_1, \ldots, q_k, u_1, \ldots, u_j = 1$, we obtain a formula for the dimension of $R^{(k,j)}_{\mathfrak{I}_{2}(n)}$.

\begin{corollary}\label{cor:b-f-cardinality-second}
    The dimension of $R^{(k,j)}_{\mathfrak{I}_{2}(n)}$ is given by
    \begin{equation}\label{eq:hilb-cardinality}
    1+ \binom{k}{2} + kj + \binom{j+1}{2} + \sum_{h=0}^n \binom{j}{h}\binom{k+n-h-1}{n-h} + 2\sum_{i=1}^{n-1} \sum_{h=0}^i \binom{j}{h}\binom{k+i-h-1}{i-h}.
    \end{equation}
\end{corollary}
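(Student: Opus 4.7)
The plan is to apply Theorem~\ref{thm:b-f-hilbert} and specialize $\Hilb(R_{\mathfrak{I}_{2}(n)}^{(k,j)}; \mathbf{q};\mathbf{u})$ at $q_1 = \cdots = q_k = 1$ and $u_1 = \cdots = u_j = 1$, converting each super Schur function into a sum of binomial products. The two standard evaluations I need are $s_{(m)}(1^k) = \binom{k+m-1}{m}$, the number of monomials of degree $m$ in $k$ commuting variables, and $s_{(1^m)}(1^j) = \binom{j}{m}$, the dimension of the $m$-th exterior power of $\C^j$. These, combined with the super Schur expansion $s_\lambda(\mathbf{q}/\mathbf{u}) = \sum_{\nu \subseteq \lambda} s_\nu(\mathbf{q}) s_{\lambda'/\nu'}(\mathbf{u})$ from the preliminaries, will convert every summand of the Hilbert series into the form prescribed by equation~(\ref{eq:hilb-cardinality}).

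First I would handle the one-row shapes $s_{(i)}(\mathbf{q}/\mathbf{u})$ that appear in both the $s_{(n)}$ term and the sum $2\sum_{i=1}^{n-1} s_{(i)}(\mathbf{q}/\mathbf{u})$. Using the explicit expansion $s_{(i)}(\mathbf{q}/\mathbf{u}) = \sum_{h=0}^i s_{(i-h)}(\mathbf{q})s_{(1^h)}(\mathbf{u})$ already recorded in Section~\ref{sec:background-preliminaries} and evaluating at $1$'s gives $\sum_{h=0}^i \binom{j}{h}\binom{k+i-h-1}{i-h}$, which, with $i = n$ for the first contribution and a factor of $2$ in front of the second, accounts for the last two sums in equation~(\ref{eq:hilb-cardinality}).

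Next I would compute $s_{(1,1)}(\mathbf{q}/\mathbf{u})$ directly from the super Schur definition. Since $(1,1)' = (2)$ and the subpartitions of $(1,1)$ are $\varnothing$, $(1)$, $(1,1)$, only the skew shapes $(2)/\varnothing$, $(2)/(1)$, and $(2)/(2)$ appear, yielding
\[ s_{(1,1)}(\mathbf{q}/\mathbf{u}) = s_{(1,1)}(\mathbf{q}) + s_{(1)}(\mathbf{q})s_{(1)}(\mathbf{u}) + s_{(2)}(\mathbf{u}). \]
Specializing this gives $\binom{k}{2} + kj + \binom{j+1}{2}$, which together with the constant term $1$ from the Hilbert series completes the matching with equation~(\ref{eq:hilb-cardinality}).

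No step here is genuinely difficult; the computation is essentially bookkeeping. The only minor thing to watch is that the summation index $h$ in the statement aligns with the super Schur expansion index, and that the skew Schur factors for $s_{(1,1)}$ are read off the conjugate partition $(2)$ rather than from $(1,1)$ itself.
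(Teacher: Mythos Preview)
Your proposal is correct and follows essentially the same approach as the paper: specialize the Hilbert series of Theorem~\ref{thm:b-f-hilbert} at all variables equal to $1$ using the standard evaluations $s_{(m)}(1^k)=\binom{k+m-1}{m}$ and $s_{(1^m)}(1^j)=\binom{j}{m}$ together with the super Schur expansion. The only cosmetic difference is that the paper records a general formula for $s_{(1^m)}(\mathbf{q}/\mathbf{u})|_{\mathbf{q},\mathbf{u}=1}$ and then applies it at $m=2$, whereas you expand $s_{(1,1)}(\mathbf{q}/\mathbf{u})$ directly from the definition; both give $\binom{k}{2}+kj+\binom{j+1}{2}$.
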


\begin{proof}
Consider the Hilbert series given in Theorem~\ref{thm:b-f-hilbert}.
Recall the Schur function specializations
\begin{equation} s_{(1^m)}(\mathbf{q})|_{q_1, \ldots, q_k =1} = \binom{k}{m} \text{ and } s_{(m)}(\mathbf{q})|_{q_1, \ldots, q_k =1} = \binom{k+m-1}{m}.\end{equation}
Then we obtain the super Schur function specializations
\begin{equation}\label{eq:super-schur-specialization} s_{(m)}(\mathbf{q}/\mathbf{u})|_{q_1, \ldots, q_k,  u_1, \ldots, u_j = 1} = \sum_{\ell = 0}^m \binom{k+\ell-1}{\ell}\binom{j}{m-\ell},\end{equation}
and 
\begin{equation} s_{(1^m)}(\mathbf{q}/\mathbf{u})|_{q_1, \ldots, q_k, u_1, \ldots, u_j = 1} = \sum_{\ell = 0}^m \binom{k}{\ell}\binom{j+m-\ell-1}{m-\ell}.\end{equation}
Specializing the Hilbert series at $q_1, \ldots, q_k, u_1, \ldots, u_j =1$ proves the claim.
\end{proof}

\section{A monomial basis}\label{sec:monomial-basis}

In this section, we derive a monomial basis for $R_{\mathfrak{I}_2(n)}^{(k,j)}$.
In lieu of determining the $\mathfrak{I}_{2}(n)$-invariants of the polynomial ring $\C[\bm{x}^{(1)}, \ldots, \bm{x}^{(k)}, \bm{\theta}^{(1)}, \ldots, \bm{\theta}^{(j)}]$, we list some elements of $I_{\mathfrak{I}_{2}(n)}^{(k,j)}$ which we will use as straightening relations in the quotient ring $R_{\mathfrak{I}_{2}(n)}^{(k,j)}$.

\begin{proposition}\label{prop:ideal-elements}
    The following elements are in the ideal $I_{\mathfrak{I}_{2}(n)}^{(k,j)}$:
    \begin{equation}
    \begin{aligned}
        &\{x_1^{(h)}x_1^{(i)} + x_2^{(h)}x_2^{(i)}\, |\, 1 \leq h \leq i \leq k\} \cup
        \{x_1^{(h)}\theta_1^{(i)} + x_2^{(h)}\theta_2^{(i)}\, |\, 1 \leq h \leq k, 1 \leq i \leq j\}\\ 
        &\cup \{\theta_1^{(h)}\theta_1^{(i)} + \theta_2^{(h)}\theta_2^{(i)}\, |\, 1 \leq h < i \leq j\}\\
        &\cup \{ (x_1^{(1)})^{\alpha_1} \cdots (x_1^{(k)})^{\alpha_k}(\theta_1^{(1)})^{\beta_1} \cdots (\theta_1^{(j)})^{\beta_j}\, |\, \alpha_1 + \cdots + \alpha_k + \beta_1 + \cdots + \beta_j = n\},
    \end{aligned}
    \end{equation}
    where the $\alpha_i$ are nonnegative integers and the $\beta_i$ are $0$ or $1$.
\end{proposition}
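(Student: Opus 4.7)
The plan is to split the proof into two parts: showing that the first three families are $\mathfrak{I}_2(n)$-invariants, and deriving the fourth family by polarization from a single base case. For the first three families, each listed element has the form $y_1^{(h)}y_1^{(i)}+y_2^{(h)}y_2^{(i)}$ for two of the variable pairs (bosonic-bosonic, bosonic-fermionic, or fermionic-fermionic). Since the generating matrices $\rho$ and $\phi$ in equation~(\ref{eq:unitary_matrices}) are orthogonal, they preserve this ``diagonal inner product'' (the computation is identical on the symmetric and exterior algebras), so each such element is an $\mathfrak{I}_2(n)$-invariant of positive degree, hence in $I_{\mathfrak{I}_2(n)}^{(k,j)}$. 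The fermionic-fermionic case is restricted to $h<i$ simply because the diagonal square $(\theta_1^{(h)})^2+(\theta_2^{(h)})^2$ already vanishes identically.

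For the fourth family, I would first establish the single-copy case $(x_1^{(1)})^n \in I_{\mathfrak{I}_2(n)}^{(k,j)}$. Equation~(\ref{eq:real-expansion}) supplies the invariant $\Re(x_1^{(1)}+ix_2^{(1)})^n \in I$, while the first-family relation $(x_1^{(1)})^2+(x_2^{(1)})^2\in I$ yields $(x_2^{(1)})^{2m}\equiv(-1)^m(x_1^{(1)})^{2m}\pmod{I}$. Substituting into equation~(\ref{eq:real-expansion}) collapses every term onto $(x_1^{(1)})^n$:
\[\Re(x_1^{(1)}+ix_2^{(1)})^n \;\equiv\; (x_1^{(1)})^n\sum_{m=0}^{\lfloor n/2 \rfloor}\binom{n}{2m} \;=\; 2^{n-1}(x_1^{(1)})^n \pmod{I},\]
so $(x_1^{(1)})^n \in I$. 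I would then reach an arbitrary target monomial $(x_1^{(1)})^{\alpha_1}\cdots(x_1^{(k)})^{\alpha_k}(\theta_1^{(1)})^{\beta_1}\cdots(\theta_1^{(j)})^{\beta_j}$ of total degree $n$ by applying a suitable composition of the polarization operators $E_1,\ldots,E_{k+j-1}$ from equations~(\ref{eq:bosonic-E})--(\ref{eq:fermionic-E}), which are $\mathfrak{I}_2(n)$-invariant and therefore preserve $I$ by the Leibniz rule (any $f=\sum_i p_ig_i$ with $g_i$ a positive-degree invariant maps to $\sum_i(E_\ell p_i)g_i+\sum_i p_i(E_\ell g_i)$, and $E_\ell g_i$ is again a positive-degree invariant). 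These operators shift first-coordinate powers one step at a time through the chain $x_1^{(1)}\to\cdots\to x_1^{(k)}\to\theta_1^{(1)}\to\cdots\to\theta_1^{(j)}$, and the constraint $\beta_i\in\{0,1\}$ is automatic from $(\theta_1^{(m)})^2=0$.

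The main obstacle will be choosing the right sequence of polarization operators for each target and tracking the resulting scalar. Fortunately, the scalar is a product of falling factorials from successive differentiations (hence nonzero), and signs from fermionic anticommutation do not affect ideal membership. A clean way to organize the argument is induction on $\sum_{\ell\geq 2}\alpha_\ell+\sum_m\beta_m$, the total number of ``shifts'' away from $(x_1^{(1)})^n$, applying at each inductive step a single $E_\ell$ that decreases this count by one.
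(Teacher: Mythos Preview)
Your proof is correct, and for the fourth family it proceeds exactly as the paper does: reduce $\Re(x_1^{(1)}+ix_2^{(1)})^n$ to $2^{n-1}(x_1^{(1)})^n$ modulo the quadratic relation, then polarize. The one genuine difference is in the first three (quadratic) families. You verify $y_1^{(h)}y_1^{(i)}+y_2^{(h)}y_2^{(i)}$ is invariant directly from the orthogonality of $\rho$ and $\phi$, whereas the paper obtains every such element by applying sequences of the $E_\ell$ to the single seed $(x_1^{(1)})^2+(x_2^{(1)})^2$, taking care with the order of application in the fermionic--fermionic case so as not to square an odd variable. Your route is more elementary and sidesteps that ordering subtlety entirely; the paper's route is more uniform (everything comes from two seeds $(x_1^{(1)})^2+(x_2^{(1)})^2$ and $(x_1^{(1)})^n$ via the same $\mathfrak{sl}(k|j)$-operators) and foreshadows the role polarization plays later. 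Your inductive bookkeeping for the fourth family is also a bit more explicit than the paper's ``applying all sequences of operators $E_\ell$''; just be sure when you carry it out that the predecessor you pick at each step avoids a fermionic exponent of $2$ (for instance, shift back from the \emph{leftmost} occupied position $\geq 2$ rather than the rightmost).
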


\begin{proof}
The set $\{(x_1^{(1)})^2 + (x_2^{(1)})^2, \Re(x_1^{(1)} + i x_2^{(1)})^n\}$ is a minimal set of generators of $I_{\mathfrak{I}_{2}(n)}^{(1,0)}$. 
Using equation~(\ref{eq:real-expansion}), since $(x_{1}^{(1)})^2 \equiv -(x_{2}^{(1)})^2 \pmod {{I_{\mathfrak{I}_{2}(n)}^{(1,0)}}}$, the invariant $\Re(x_1^{(1)} + i x_2^{(1)})^n$ can be replaced by $(x_{1}^{(1)})^n$.
Thus $\{(x_1^{(1)})^2 + (x_2^{(1)})^2, (x_1^{(1)})^n\}$ is also a minimal set of generators for $I_{\mathfrak{I}_{2}(n)}^{(1,0)}$.\footnote{Here and throughout, whenever we are considering $I_{\mathfrak{I}_2(n)}^{(0,j)}$ or $R_{\mathfrak{I}_2(n)}^{(0,j)}$, if we use $I_{\mathfrak{I}_2(n)}^{(1,0)}$ or $R_{\mathfrak{I}_2(n)}^{(1,0)}$ in a proof, we can first construct $I_{\mathfrak{I}_2(n)}^{(1,j)}$ or $R_{\mathfrak{I}_2(n)}^{(1,j)}$ and then restrict to the desired ideal or ring.}

Applying all sequences of operators $E_\ell$ from equations~(\ref{eq:bosonic-E}-\ref{eq:fermionic-E}) to $(x_1^{(1)})^2 +(x_2^{(1)})^2$ gives the sets 
\begin{align}\label{eq:three_sets}
&\{x_1^{(h)}x_1^{(i)} + x_2^{(h)}x_2^{(i)}\, |\, 1 \leq h \leq i \leq k\},\\
&\{x_1^{(h)}\theta_1^{(i)} + x_2^{(h)}\theta_2^{(i)}\, |\, 1 \leq h \leq k, 1 \leq i \leq j\},\\ 
&\{\theta_1^{(h)}\theta_1^{(i)} + \theta_2^{(h)}\theta_2^{(i)}\, |\, 1 \leq h < i \leq j\}.\label{eq:order-matters}\end{align}
Regarding equation~\ref{eq:order-matters}, the operators must be applied in a certain order to avoid squaring any fermionic variables. Specifically, apply $E_{k+i-1}\cdots E_1$ to $(x_1^{(1)})^2+(x_2^{(1)})^2$ to yield $x_1^{(1)}\theta_1^{(i)}+x_2^{(1)}\theta_2^{(i)}$. Then apply $E_{k+h-1} \cdots E_1$ to yield $\theta_1^{(h)}\theta_1^{(i)} + \theta_2^{(h)}\theta_2^{(i)}$.

Similarly, applying all sequences of operators $E_\ell$ to $(x_{1}^{(1)})^n$ gives the set
\begin{equation}\label{eq:fourth_set}\{ (x_1^{(1)})^{\alpha_1} \cdots (x_1^{(k)})^{\alpha_k}(\theta_1^{(1)})^{\beta_1} \cdots (\theta_1^{(j)})^{\beta_j}\, |\, \alpha_1 + \cdots + \alpha_k + \beta_1 + \cdots + \beta_j = n,  \alpha_i \in \Z_{\geq 0}, \beta_i \in \{0,1\}\}.\end{equation}

Since $(x_{1}^{(1)})^2 +(x_{2}^{(1)})^2$ and $(x_{1}^{(1)})^n$ are in $I_{\mathfrak{I}_{2}(n)}^{(k,j)}$ and the operators $E_\ell$ are $\mathfrak{I}_{2}(n)$-invariant, the polynomials in equations~(\ref{eq:three_sets}-\ref{eq:fourth_set}) are in the ideal ${I_{\mathfrak{I}_{2}(n)}^{(k,j)}}$.
\end{proof}

Now we find a monomial basis for $R^{(k,j)}_{\mathfrak{I}_{2}(n)}$.

\begin{theorem}\label{thm:b-f-basis}
    A monomial basis $B^{(k,j)}_{\mathfrak{I}_{2}(n)}$ for the coinvariant ring $R^{(k,j)}_{\mathfrak{I}_{2}(n)}$ is given by:
    \begin{equation}\label{eq:basis-set}
    \begin{aligned}
        &\{(x_{1}^{(1)})^{\alpha_1} \cdots (x_{1}^{(k)})^{\alpha_k}(\theta_{1}^{(1)})^{\beta_1} \cdots (\theta_{1}^{(j)})^{\beta_j} \, | \, 0 \leq \alpha_1 + \cdots + \alpha_k + \beta_1 + \cdots + \beta_j \leq n-1\}\\
        &\cup \bigcup_{i=1}^k \{(x_{1}^{(i)})^{\alpha_i} \cdots (x_{1}^{(k)})^{\alpha_k} (\theta_{1}^{(1)})^{\beta_1} \cdots (\theta_{1}^{(j)})^{\beta_j} x_{2}^{(i)} \, | \, 0\leq \alpha_i + \cdots + \alpha_k + \beta_1 + \cdots + \beta_j \leq n-1\}\\
        &\cup \bigcup_{i=1}^j \{(\theta_{1}^{(i+1)})^{\beta_{i+1}} \cdots (\theta_{1}^{(j)})^{\beta_j} \theta_{2}^{(i)} \, | \, 0\leq \beta_{i+1} + \cdots + \beta_j \leq n-1\}\\
        &\cup \{x_{1}^{(h)}x_{2}^{(i)}\, | \, 1 \leq h < i \leq k\} \cup \{x_{1}^{(h)}\theta_{2}^{(i)}\, | \, 1 \leq h \leq k, 1 \leq i \leq j\} \cup \{\theta_{1}^{(h)}\theta_{2}^{(i)}\, | \, 1 \leq h \leq i \leq j\},
    \end{aligned} 
    \end{equation}
    where the $\alpha_i$ are nonnegative integers and the $\beta_i$ are $0$ or $1$.
\end{theorem}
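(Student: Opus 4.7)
The plan is to show that $B^{(k,j)}_{\mathfrak{I}_{2}(n)}$ spans $R^{(k,j)}_{\mathfrak{I}_{2}(n)}$ using the straightening relations of Proposition~\ref{prop:ideal-elements}, and to verify that $|B^{(k,j)}_{\mathfrak{I}_{2}(n)}|$ matches the multigraded Hilbert series of Theorem~\ref{thm:b-f-hilbert}; a spanning set whose cardinality equals the dimension is automatically a basis.

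For spanning, I first apply the three families of binomial relations from Proposition~\ref{prop:ideal-elements}, namely $x_{2}^{(h)}x_{2}^{(i)} \equiv -x_{1}^{(h)}x_{1}^{(i)}$, $x_{2}^{(h)}\theta_{2}^{(i)} \equiv -x_{1}^{(h)}\theta_{1}^{(i)}$, and $\theta_{2}^{(h)}\theta_{2}^{(i)} \equiv -\theta_{1}^{(h)}\theta_{1}^{(i)}$ (together with the vanishing of fermion squares), to reduce an arbitrary monomial to the form $m_1 v$ where $m_1$ involves only subscript-$1$ variables and $v \in \{1, x_2^{(i)}, \theta_2^{(i)}\}$. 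If $v=1$, then either $\deg m_1 \leq n-1$ and $m_1 v$ lies in the first basis family, or $\deg m_1 \geq n$ and $m_1 v \equiv 0$ by the fourth family of relations in Proposition~\ref{prop:ideal-elements}. If $v=x_2^{(i)}$ and every $x_1$-factor of $m_1$ has index $\geq i$, then $m_1 v$ already lies in the second basis family; otherwise I establish a swap identity $x_1^{(h)} m' x_2^{(i)} \equiv \pm x_1^{(i)} m' x_2^{(h)}$ by two applications of the binomial relations (first converting a factor of $m'$, or $x_1^{(h)}$ itself, into its subscript-$2$ partner, then pairing that partner with $x_2^{(i)}$ so as to convert back with the roles of $h$ and $i$ exchanged). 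This swap relocates the $x_2$ index to the smallest bosonic subscript appearing in $m_1$, after which the monomial lies in the second family (or, in singleton cases, in the fourth family). A parallel analysis for $v=\theta_2^{(i)}$ uses the third binomial relation when $m_1$ is pure fermionic (landing in the third family) and a mixture of relations when $x_1$'s are present (landing in the second family), with index collisions reducing to $0$ via fermion squaring.

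For counting, the first family contributes $\sum_{m=0}^{n-1} s_{(m)}(\mathbf{q}/\mathbf{u})$. Summing the second family over its index $i$ and invoking the hockey-stick identity $\sum_{i=1}^k q_i h_m(q_i,\ldots,q_k) = h_{m+1}(\mathbf{q})$ together with $s_{(m)}(\mathbf{q}/\mathbf{u}) = \sum_\ell h_{m-\ell}(\mathbf{q}) e_\ell(\mathbf{u})$ gives $\sum_{m=1}^n s_{(m)}(\mathbf{q}/\mathbf{u}) - \sum_{m=1}^n e_m(\mathbf{u})$; the third family contributes the cancelling $\sum_{m=1}^n e_m(\mathbf{u})$ by the analogous $\sum_{i=1}^j u_i e_m(u_{i+1},\ldots,u_j) = e_{m+1}(\mathbf{u})$; and the fourth family expands directly to $s_{(1,1)}(\mathbf{q}/\mathbf{u})$. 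Summing yields $1 + s_{(1,1)}(\mathbf{q}/\mathbf{u}) + s_{(n)}(\mathbf{q}/\mathbf{u}) + 2\sum_{i=1}^{n-1} s_{(i)}(\mathbf{q}/\mathbf{u})$, matching Theorem~\ref{thm:b-f-hilbert}. The main obstacle is the swap identity: it must be established across several sub-cases (depending on the types of factors in $m'$ and whether $v$ is bosonic or fermionic), each requiring the correct pair of straightening relations, though no case exceeds two reduction steps.
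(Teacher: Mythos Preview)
Your proposal is correct and follows the same two-step architecture as the paper (spanning via straightening, then a dimension count), but with a notable difference in each step.

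For spanning, the paper packages the swap moves as six explicit product identities (e.g.\ $x_{1}^{(\ell)}(x_{2}^{(h)}x_{1}^{(i)} - x_{1}^{(h)}x_{2}^{(i)})$ written as a combination of degree-two ideal generators), from which rules (C)--(F) are read off. Your route---two successive applications of the binomial relations $z_1^{(h)}w_1^{(\ell)} \equiv -z_2^{(h)}w_2^{(\ell)}$, first to create a second subscript-$2$ factor and then to reabsorb it with the index swapped---yields the same congruences without writing down the product identities. One small omission: for $v \neq 1$ you still need to invoke the degree-$n$ relations to dispose of the case $\deg m_1 \geq n$ (this is immediate, since $m_1$ itself already lies in the ideal, but you only stated it for $v=1$).

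For the count, the paper works ungraded: it sets up an induction on $n$ and matches the cardinality of the set against the dimension formula of Corollary~\ref{cor:b-f-cardinality-second} using hockey-stick identities on binomial coefficients. Your computation is finer: you track the full multigraded generating function of each family and match against the Hilbert series of Theorem~\ref{thm:b-f-hilbert}, using the telescoping identities $\sum_{i=1}^k q_i h_m(q_i,\ldots,q_k)=h_{m+1}(\mathbf{q})$ and $\sum_{i=1}^j u_i e_m(u_{i+1},\ldots,u_j)=e_{m+1}(\mathbf{u})$. This avoids the induction entirely and simultaneously verifies that the basis is multigraded, which the paper's argument gives only implicitly.
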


\begin{proof}
The main idea of the proof is to describe straightening relations, which will allow us to construct the desired spanning set. 
Then we show that the spanning set is a basis by showing that it has the correct dimension.

The following identities are elements in the ideal ${I_{\mathfrak{I}_{2}(n)}^{(k,j)}}$, by Proposition~\ref{prop:ideal-elements}:
\begin{equation}\label{eq:xxx-relation}
    x_{1}^{(\ell)}(x_{2}^{(h)}x_{1}^{(i)} - x_{1}^{(h)}x_{2}^{(i)}) = x_{2}^{(h)}(x_{1}^{(\ell)}x_{1}^{(i)} + x_{2}^{(\ell)} x_{2}^{(i)}) - x_{2}^{(i)}(x_{1}^{(\ell)}x_{1}^{(h)} + x_{2}^{(\ell)}x_{2}^{(h)}),
\end{equation} 
\begin{equation}\label{eq:thetaxx-relation}
    \theta_{1}^{(\ell)}(x_{2}^{(h)}x_{1}^{(i)} - x_{1}^{(h)}x_{2}^{(i)}) = x_{2}^{(h)}(\theta_{1}^{(\ell)}x_{1}^{(i)} + \theta_{2}^{(\ell)} x_{2}^{(i)}) - x_{2}^{(i)}(\theta_1^{(\ell)}x_{1}^{(h)} + \theta_{2}^{(\ell)}x_{2}^{(h)}),
\end{equation} 
\begin{equation}\label{eq:xxtheta-relation}
    x_{1}^{(\ell)}(x_{2}^{(h)}\theta_{1}^{(i)} - x_{1}^{(h)}\theta_{2}^{(i)}) = x_{2}^{(h)}(x_{1}^{(\ell)}\theta_{1}^{(i)} + x_{2}^{(\ell)} \theta_{2}^{(i)}) - \theta_{2}^{(i)}(x_{1}^{(\ell)}x_{1}^{(h)} + x_{2}^{(\ell)}x_{2}^{(h)}),
\end{equation} 
\begin{equation}\label{eq:thetaxtheta-relation}
    \theta_{1}^{(\ell)}(x_{2}^{(h)}\theta_{1}^{(i)} - x_{1}^{(h)}\theta_{2}^{(i)}) = x_{2}^{(h)}(\theta_{1}^{(\ell)}\theta_{1}^{(i)} + \theta_{2}^{(\ell)} \theta_{2}^{(i)}) - (\theta_1^{(\ell)}x_{1}^{(h)} + \theta_{2}^{(\ell)}x_{2}^{(h)})\theta_{2}^{(i)},
\end{equation} 
\begin{equation}\label{eq:xthetatheta-relation}
    x_{1}^{(\ell)}(\theta_{2}^{(h)}\theta_{1}^{(i)} - \theta_{1}^{(h)}\theta_{2}^{(i)}) = \theta_{2}^{(h)}(x_{1}^{(\ell)}\theta_{1}^{(i)} + x_{2}^{(\ell)} \theta_{2}^{(i)}) - (x_{1}^{(\ell)}\theta_{1}^{(h)} + x_{2}^{(\ell)}\theta_{2}^{(h)})\theta_{2}^{(i)},
\end{equation} 
\begin{equation}\label{eq:thetathetatheta-relation}
    \theta_{1}^{(\ell)}(\theta_{2}^{(h)}\theta_{1}^{(i)} - \theta_{1}^{(h)}\theta_{2}^{(i)}) = -\theta_{2}^{(h)}(\theta_{1}^{(\ell)}\theta_{1}^{(i)} + \theta_{2}^{(\ell)} \theta_{2}^{(i)}) - (\theta_1^{(\ell)}\theta_{1}^{(h)} + \theta_{2}^{(\ell)}\theta_{2}^{(h)})\theta_{2}^{(i)}.
\end{equation} 

We establish the following straightening rules:

\begin{enumerate}
    \item[(A)] $\{x_1^{(h)}x_1^{(i)} + x_2^{(h)}x_2^{(i)}\, |\, 1 \leq h \leq i \leq k\} \cup
        \{x_1^{(h)}\theta_1^{(i)} + x_2^{(h)}\theta_2^{(i)}\, |\, 1 \leq h \leq k, 1 \leq i \leq j\} \cup \{\theta_1^{(h)}\theta_1^{(i)} + \theta_2^{(h)}\theta_2^{(i)}\, |\, 1 \leq h < i \leq j\} \subseteq {I_{\mathfrak{I}_{2}(n)}^{(k,j)}}$ allows us to re-express any monomial $M \in \C[\bm{x}^{(1)}, \ldots, \bm{x}^{(k)},\bm{\theta}^{(1)}, \ldots, \bm{\theta}^{(j)}]$ modulo ${I_{\mathfrak{I}_{2}(n)}^{(k,j)}}$ as a monomial of total degree no greater than 1 in $\{x_{2}^{(1)}, \ldots, x_{2}^{(k)}, \theta_2^{(1)}, \ldots, \theta_2^{(j)}\}$.  
    \item[(B)] $\{ (x_1^{(1)})^{\alpha_1} \cdots (x_1^{(k)})^{\alpha_k}(\theta_1^{(1)})^{\beta_1} \cdots (\theta_1^{(j)})^{\beta_j}\, |\, \alpha_1 + \cdots + \alpha_k + \beta_1 + \cdots + \beta_j = n,  \alpha_i \in \Z_{\geq 0}, \beta_i \in \{0,1\}\} \subseteq {I_{\mathfrak{I}_{2}(n)}^{(k,j)}}$ allows us to re-express any monomial $M \in \C[\bm{x}^{(1)}, \ldots, \bm{x}^{(k)},\bm{\theta}^{(1)}, \ldots, \bm{\theta}^{(j)}]$ modulo ${I_{\mathfrak{I}_{2}(n)}^{(k,j)}}$ as a monomial of total degree no greater than $n-1$ in $\{x_{1}^{(1)}, \ldots, x_{1}^{(k)},\theta_1^{(1)}, \ldots,\theta_1^{(j)}\}$.
    \item[(C)] For any monomial $M \in \C[\bm{x}^{(1)}, \ldots, \bm{x}^{(k)},\bm{\theta}^{(1)}, \ldots, \bm{\theta}^{(j)}]$ which has as a factor $x_{1}^{(\ell)}(x_{1}^{(h)}x_{2}^{(i)})$ for some $\ell$ and $h<i$, then modulo ${I_{\mathfrak{I}_{2}(n)}^{(k,j)}}$ we can re-express that factor within $M$ as $x_{1}^{(\ell)}(x_{2}^{(h)}x_{1}^{(i)})$, by equation~(\ref{eq:xxx-relation}). Similarly, for any monomial $M$ which has as a factor $\theta_{1}^{(\ell)}(x_{1}^{(h)}x_{2}^{(i)})$ for some $\ell$ and $h<i$, then modulo ${I_{\mathfrak{I}_{2}(n)}^{(k,j)}}$ we can re-express that factor within $M$ as $\theta_{1}^{(\ell)}(x_{2}^{(h)}x_{1}^{(i)})$, by equation~(\ref{eq:thetaxx-relation}).
    \item[(D)] For any monomial $M \in \C[\bm{x}^{(1)}, \ldots, \bm{x}^{(k)},\bm{\theta}^{(1)}, \ldots, \bm{\theta}^{(j)}]$ which has as a factor $x_{1}^{(\ell)}(x_{1}^{(h)}\theta_{2}^{(i)})$ for some $\ell$, then modulo ${I_{\mathfrak{I}_{2}(n)}^{(k,j)}}$ we can re-express that factor within $M$ as $x_{1}^{(\ell)}(x_{2}^{(h)}\theta_{1}^{(i)})$, by equation~(\ref{eq:xxtheta-relation}). Similarly, for any monomial $M$ which has as a factor $\theta_{1}^{(\ell)}(x_{1}^{(h)}\theta_{2}^{(i)})$ for some $\ell$, then modulo ${I_{\mathfrak{I}_{2}(n)}^{(k,j)}}$ we can re-express that factor within $M$ as $\theta_{1}^{(\ell)}(x_{2}^{(h)}\theta_{1}^{(i)})$, by equation~(\ref{eq:thetaxtheta-relation}).
    \item[(E)] For any monomial $M \in \C[\bm{x}^{(1)}, \ldots, \bm{x}^{(k)},\bm{\theta}^{(1)}, \ldots, \bm{\theta}^{(j)}]$ which has as a factor $x_{1}^{(\ell)}(\theta_{1}^{(h)}\theta_{2}^{(i)})$ for some $\ell$ and $h<i$, then modulo ${I_{\mathfrak{I}_{2}(n)}^{(k,j)}}$ we can re-express that factor within $M$ as $x_{1}^{(\ell)}(\theta_{2}^{(h)}\theta_{1}^{(i)})$, by equation~(\ref{eq:xthetatheta-relation}). Similarly, for any monomial $M$ which has as a factor $\theta_{1}^{(\ell)}(\theta_{1}^{(h)}\theta_{2}^{(i)})$ for some $\ell$ and $h<i$, then modulo ${I_{\mathfrak{I}_{2}(n)}^{(k,j)}}$ we can re-express that factor within $M$ as $\theta_{1}^{(\ell)}(\theta_{2}^{(h)}\theta_{1}^{(i)})$, by equation~(\ref{eq:thetathetatheta-relation}).
    \item[(F)] For any monomial $M \in \C[\bm{x}^{(1)}, \ldots, \bm{x}^{(k)},\bm{\theta}^{(1)}, \ldots, \bm{\theta}^{(j)}]$ which has as a factor $\theta_{1}^{(\ell)}(\theta_{1}^{(i)}\theta_{2}^{(i)})$ for some $\ell$, then modulo ${I_{\mathfrak{I}_{2}(n)}^{(k,j)}}$ we can re-express that factor within $M$ as $\theta_{1}^{(\ell)}(\theta_{2}^{(i)}\theta_{1}^{(i)})$, by equation~(\ref{eq:thetathetatheta-relation}) specialized at $h=i$. On the other hand, $\theta_{1}^{(\ell)}(\theta_{1}^{(i)}\theta_{2}^{(i)})= -\theta_{1}^{(\ell)}(\theta_{2}^{(i)}\theta_{1}^{(i)})$ by directly swapping the order of the anticommuting variables. Together, these imply that $\theta_{1}^{(\ell)}(\theta_{2}^{(i)}\theta_{1}^{(i)})$ must be $0$ modulo ${I_{\mathfrak{I}_{2}(n)}^{(k,j)}}$, so $M = 0$ modulo ${I_{\mathfrak{I}_{2}(n)}^{(k,j)}}$.
\end{enumerate}

First, note the spanning set should contain a monomial $(x_{1}^{(1)})^{\alpha_1} \cdots (x_{1}^{(k)})^{\alpha_k} (\theta_{1}^{(1)})^{\beta_1} \cdots (\theta_{1}^{(j)})^{\beta_j}$ for each choice of nonnegative integers $\alpha_1, \ldots, \alpha_k$ and $\beta_1,\ldots,\beta_j \in \{0,1\}$ such that $0 \leq \alpha_1 +  \cdots + \alpha_k +\beta_1 + \cdots +\beta_j \leq n-1$ because of (B). While there might be no variables from $\{x_{2}^{(1)}, \ldots, x_{2}^{(k)}, \theta_{2}^{(1)}, \ldots, \theta_{2}^{(j)}\}$ present, there could also be one, say $x_{2}^{(1)}$, of degree 1, due to (A). Thus in the spanning set we include
\begin{equation}\label{eq:23}\{(x_{1}^{(1)})^{\alpha_1} \cdots (x_{1}^{(k)})^{\alpha_k}(\theta_{1}^{(1)})^{\beta_1} \cdots (\theta_{1}^{(j)})^{\beta_j} \, | \, 0 \leq \alpha_1 + \cdots + \alpha_k + \beta_1 + \cdots + \beta_j \leq n-1\},\end{equation}
and 
\begin{equation}\{(x_{1}^{(1)})^{\alpha_1} \cdots (x_{1}^{(k)})^{\alpha_k}(\theta_{1}^{(1)})^{\beta_1} \cdots (\theta_{1}^{(j)})^{\beta_j}x_{2}^{(1)} \, | \, 0 \leq \alpha_1 + \cdots + \alpha_k + \beta_1 + \cdots + \beta_j \leq n-1\}.\end{equation}

In most circumstances, we will be able to swap for a different choice of the variable $x_{2}^{(i)}$ (to lower $i$) due to (C), (D), and (E). The exceptions to (C) occur when $x_{1}^{(h)}x_{2}^{(i)}$ appears alone, so we must also include these monomials in the spanning set:
\begin{equation} \{x_{1}^{(h)}x_{2}^{(i)}\, | \, 1 \leq h < i \leq k\}.\end{equation}
The exceptions to (D) occur when $x_{1}^{(h)}\theta_{2}^{(i)}$ appears alone, so we must also include these monomials in the spanning set:
\begin{equation}\{x_{1}^{(h)}\theta_{2}^{(i)}\, | \, 1 \leq h \leq k, 1 \leq i \leq j\}.\end{equation}
The exceptions to (E) occur when $\theta_{1}^{(h)}\theta_{2}^{(i)}$ appears alone, so we must also include these monomials in the spanning set:
\begin{equation} \{\theta_{1}^{(h)}\theta_{2}^{(i)}\, | \, 1 \leq h < i \leq j\}.\end{equation}

Now we can reduce the problem repeatedly, as there are no more monomials needed left with a variable $x_{1}^{(1)}$ or $x_{2}^{(1)}$. As we already took care of all monomials with no $x_{2}^{(i)}$ variables, we are left to consider
\begin{equation}
\begin{aligned}&\{(x_{1}^{(2)})^{\alpha_2} \cdots (x_{1}^{(k)})^{\alpha_k} (\theta_{1}^{(1)})^{\beta_1} \cdots (\theta_{1}^{(j)})^{\beta_j} x_{2}^{(2)} \, | \, 0\leq \alpha_2 + \cdots + \alpha_k + \beta_1 + \cdots + \beta_j \leq n-1\},\\
&\{(x_{1}^{(3)})^{\alpha_3} \cdots (x_{1}^{(k)})^{\alpha_k} (\theta_{1}^{(1)})^{\beta_1} \cdots (\theta_{1}^{(j)})^{\beta_j} x_{2}^{(3)} \, | \, 0\leq \alpha_3 + \cdots + \alpha_k + \beta_1 + \cdots + \beta_j \leq n-1\},\\
&\qquad\ldots \\
&\{(x_{1}^{(k)})^{\alpha_k} (\theta_{1}^{(1)})^{\beta_1} \cdots (\theta_{1}^{(j)})^{\beta_j} x_{2}^{(k)} \, | \, 0\leq \alpha_k + \beta_1 + \cdots + \beta_j \leq n-1\}.
\end{aligned}
\end{equation}
Now at this point, there are no more monomials needed left with a variable $x_{1}^{(\ell)}$ or $x_{2}^{(\ell)}$ for any $\ell$. We are left to consider monomials consisting of only fermionic variables which must contain some variable $\theta_2^{(\ell)}$. 
By (F), when $\theta_{1}^{(i)}\theta_{2}^{(i)}$ appears alone, we must also include these monomials in the spanning set:
\begin{equation} \{\theta_{1}^{(i)}\theta_{2}^{(i)}\, | \, 1 \leq i \leq j\}.\end{equation}
Going forwards, we cannot have $\theta_1^{(\ell)}\theta_2^{(\ell)}$ appearing in a monomial, as by (F) it will reduce to 0 or be one of the monomials we already included. Thus we are left to progressively consider
\begin{equation}\label{eq:30}
\begin{aligned}&\{(\theta_{1}^{(2)})^{\beta_{2}} \cdots (\theta_{1}^{(j)})^{\beta_j} \theta_{2}^{(1)} \, | \, 0\leq \beta_{2} + \cdots + \beta_j \leq n-1\},\\
&\{(\theta_{1}^{(3)})^{\beta_{3}} \cdots (\theta_{1}^{(j)})^{\beta_j} \theta_{2}^{(2)} \, | \, 0\leq \beta_{3} + \cdots + \beta_j \leq n-1\},\\
&\qquad\ldots\\
&\{(\theta_{1}^{(j)})^{\beta_j} \theta_{2}^{(j-1)} \, | \, 0\leq \beta_j \leq n-1\},\\
&\{ \theta_{2}^{(j)} \}.\end{aligned}
\end{equation}
Taking the union of all of these sets in equations~(\ref{eq:23}-\ref{eq:30}) shows that the claimed basis is a spanning set.

We will show that the spanning set is a basis by showing that its cardinality matches the dimension of $R_{\mathfrak{I}_2(n)}^{(k,j)}$ from Corollary~\ref{cor:b-f-cardinality-second}.
We will show this by induction on $n$.
For fixed $k,j$, let $P_{n}$ denote the cardinality given in equation~(\ref{eq:hilb-cardinality}), and the induction hypothesis is that the cardinality of the spanning set in equation~(\ref{eq:basis-set}) is equal to $P_n$.

For the base case of $n=2$, a direct computation shows that both $P_2$ and the cardinality of the spanning set are
\begin{equation}
    1 + 2k+2j+2kj +\binom{k}{2}+\binom{k+1}{2}+\binom{j}{2}+\binom{j+1}{2}.
\end{equation}

For the inductive step, we calculate
\begin{equation}\label{eq:P-difference}
    P_{n+1} - P_{n} = \sum_{h=0}^{n+1} \binom{j}{h} \binom{k+n-h}{n+1-h} + \sum_{h=0}^{n} \binom{j}{h} \binom{k+n-h-1}{n-h}.
\end{equation}
Let us label the constituent sets in equation~(\ref{eq:basis-set}) which depend on $n$:
\begin{equation}
    A_n =\{(x_{1}^{(1)})^{\alpha_1} \cdots (x_{1}^{(k)})^{\alpha_k}(\theta_{1}^{(1)})^{\beta_1} \cdots (\theta_{1}^{(j)})^{\beta_j} \, | \, 0 \leq \alpha_1 + \cdots + \alpha_k + \beta_1 + \cdots + \beta_j \leq n-1\}\
\end{equation}
\begin{equation}
    B_n = \bigcup_{i=1}^k \{(x_{1}^{(i)})^{\alpha_i} \cdots (x_{1}^{(k)})^{\alpha_k} (\theta_{1}^{(1)})^{\beta_1} \cdots (\theta_{1}^{(j)})^{\beta_j} x_{2}^{(i)} \, | \, 0\leq \alpha_i + \cdots + \alpha_k + \beta_1 + \cdots + \beta_j \leq n-1\}
\end{equation}
\begin{equation}
    C_n = \bigcup_{i=1}^j \{(\theta_{1}^{(i+1)})^{\beta_{i+1}} \cdots (\theta_{1}^{(j)})^{\beta_j} \theta_{2}^{(i)} \, | \, 0\leq \beta_{i+1} + \cdots + \beta_j \leq n-1\}
\end{equation}

We compute the set differences between $n+1$ and $n$, finding that
\begin{equation}
    A_{n+1}\setminus A_n = \{(x_{1}^{(1)})^{\alpha_1} \cdots (x_{1}^{(k)})^{\alpha_k}(\theta_{1}^{(1)})^{\beta_1} \cdots (\theta_{1}^{(j)})^{\beta_j} \, | \, \alpha_1 + \cdots + \alpha_k + \beta_1 + \cdots + \beta_j = n\},
\end{equation}
\begin{equation}
    B_{n+1}\setminus B_n = \bigcup_{i=1}^k \{(x_{1}^{(i)})^{\alpha_i} \cdots (x_{1}^{(k)})^{\alpha_k} (\theta_{1}^{(1)})^{\beta_1} \cdots (\theta_{1}^{(j)})^{\beta_j} x_{2}^{(i)} \, | \, \alpha_i + \cdots + \alpha_k + \beta_1 + \cdots + \beta_j = n\},
\end{equation}
\begin{equation}
    C_{n+1}\setminus C_n = \bigcup_{i=1}^j \{(\theta_{1}^{(i+1)})^{\beta_{i+1}} \cdots (\theta_{1}^{(j)})^{\beta_j} \theta_{2}^{(i)} \, | \, \beta_{i+1} + \cdots + \beta_j = n\}.
\end{equation}
Counting with stars and bars arguments, their cardinalities are 
\begin{equation}\label{eq:A-final}
    |A_{n+1}\setminus A_n| = \sum_{h=0}^n \binom{j}{h} \binom{k-1+n-h}{n-h},
\end{equation}
\begin{equation}
    |B_{n+1}\setminus B_n| = \sum_{i=1}^k\sum_{h=0}^{n} \binom{j}{h} \binom{k-i+n-h}{n-h},
\end{equation}
which by reversing the order of summation and the hockey-stick identity simplifies to
\begin{equation}\label{eq:B-final}
    |B_{n+1}\setminus B_n| = \sum_{h=0}^{n} \binom{j}{h} \binom{k+n-h}{n-h-1}.
\end{equation}
We compute that
\begin{equation}\label{eq:C-final}
    |C_{n+1}\setminus C_n| = \sum_{i=1}^j\binom{j-i}{n} = \binom{j}{n+1}.
\end{equation}
Note that equations~(\ref{eq:B-final}) and~(\ref{eq:C-final}) can be combined to give
\begin{equation}
    \sum_{h=0}^{n+1} \binom{j}{h} \binom{k+n-h}{n-h-1},
\end{equation}
hence their sum with equation~(\ref{eq:A-final}) equals equation~(\ref{eq:P-difference}).
Thus the spanning set has cardinality $P_n$ for all $n\geq 2$, and hence must be a basis.
\end{proof}

\begin{remark}
    It is possible to prove the main results without using diagonal supersymmetry, generalizing the method in \cite[Chapter IV]{Alfano}.
    First, derive the monomial basis for $R^{(k,j)}_{\mathfrak{I}_{2}(n)}$ (Theorem~\ref{thm:b-f-basis}) by constructing a spanning set, and then construct enough linearly independent harmonics in the harmonic space isomorphic to $R_{\mathfrak{I}_2(n)}^{(k,j)}$. 
    Second, enumerate each graded component of the basis to determine the explicit multigraded Hilbert series for $R^{(k,j)}_{\mathfrak{I}_{2}(n)}$ (Theorem~\ref{thm:b-f-hilbert}).
    Finally, apply the polarization operators $E_\ell$ to Alfano's formula for the bigraded character series of $R_{\mathfrak{I}_2(n)}^{(2,0)}$ in equation~(\ref{eq:alfano-(2,0)}) to derive the explicit multigraded character series for $R^{(k,j)}_{\mathfrak{I}_{2}(n)}$ (Theorem~\ref{thm:b-f-character}).
\end{remark}

\section{The sign character}\label{sec:sign-character}

The sign character, which is $\chi_2$ in our notation, is of particular interest, so we briefly record its multigraded multiplicty in $R_{\mathfrak{I}_2(n)}^{(k,j)}$. The dimension of the sign character of $R_W^{(2,0)}$ defines the Catalan numbers of type $W$, for any finite Coxeter group $W$, and the $q,t$-graded dimension of the sign character defines the $q,t$-Catalan numbers of type $W$. Denote the latter by $\Cat(W;q,t)$. See \cite[Section 7]{Haiman1994} and \cite{Stump} for a discussion of what is conjectured and known for different groups $W$. 

Define
\begin{equation} [n]_{q,t} := q^{n-1} + q^{n-2}t+ \cdots + qt^{n-2} + t^{n-1}.\end{equation}
From Alfano's formula for the $q,t$-graded character series of $R_{\mathfrak{I}_2(n)}^{(2,0)}$ \cite{Alfano}, we can extract the $q,t$-graded multiplicity of the sign character as
\begin{equation}
    \Cat(\mathfrak{I}_2(n);q,t) = [n+1]_{q,t} + qt = s_{(n)}(q,t) + s_{(1,1)}(q,t),
\end{equation}
which is also given in \cite[Corollary 3]{Stump}.

More generally, for arbitrary $(k,j)$, denote the $\mathbf{q},\mathbf{u}$-graded multiplicity of the sign character of $R_W^{(k,j)}$ by $\Cat(W;\mathbf{q};\mathbf{u})$,\footnote{Note that $\Cat(W;q,t)$ is recovered, when $k \geq 2$, by setting all but two of the $q_i$ to zero and all of the $u_i$ to zero. Specializing this with all variables to 1 is not in general a Catalan number of type $W$.} where as usual, $\mathbf{q} = q_1,\ldots, q_k$ and $\mathbf{u} = u_1,\ldots, u_j$. Then from Theorem~\ref{thm:b-f-character}, we immediately get that
\begin{equation}
    \Cat(\mathfrak{I}_2(n);\mathbf{q};\mathbf{u}) = s_{(n)}(\mathbf{q}/\mathbf{u}) + s_{(1,1)}(\mathbf{q}/\mathbf{u}).
\end{equation}

\section{Numerology}\label{sec:discussion}

In this section, we discuss various specializing the dimension of $R^{(k,j)}_{G}$ to some small values of $(k,j)$, using Corollary~\ref{cor:b-f-cardinality-second} to get formulas for the dimension of $R^{(k,j)}_{\mathfrak{I}_2(n)}$. See Figure~\ref{fig:typeI-dims} for some values of $\dim(R_{\mathfrak{I}_2(n)}^{(k,j)})$.

\begin{figure}
\iffalse
\begin{tabular}{l|lllll}
\diagbox{$k$}{$j$} & $0$       & $1$    & $2$  & $3$     & $4$ \\ \hline
$0$   &           & $4$    & $\leq10$ &   $\leq21$      &  $\leq41$   \\
$1$   & $2n$      & $4n+1$ & $8n$ & $\geq16n-8$ &  $\geq32n-35 $   \\
$2$   & $(n+1)^2$ & $2(n^2+n+2)$  &  $4n^2+9$   & $8n^2-8n+20$  &  $\leq 16n^2-32n+50$   \\
$3$   & $\frac{2n^3+9n^2+13n+18}{6}$  &  $\frac{2n^3+6n^2+7n+21}{3}$  &   $\frac{4n^3+6n^2+8n+36}{3}$   &   $\frac{8n^3+16n+51}{3}$      &  $\frac{16n^3-24n^2+56n+48}{3}$   \\
$4$   & $\frac{n^4+8n^3+23n^2+28n+72}{12}$  & $\frac{n^4+6n^3+14n^2+15n+66}{6}$ &  $\frac{n^4 + 4n^3 +8n^2 +8n+51}{3}$ & $\frac{2n^4+4n^3+10n^2+8n+72}{3}$   &   $\frac{4n^4+20n^2+99}{3}$ 
\end{tabular}
\fi

\begin{tabular}{l|lll}
\diagbox{$k$}{$j$} & $0$       & $1$    & $2$   \\ \hline
$0$   &           & $4$    & $\leq10$   \\
$1$   & $2n$      & $4n+1$ & $8n$   \\
$2$   & $(n+1)^2$ & $2(n^2+n+2)$  &  $4n^2+9$   \\
$3$   & $\frac{2n^3+9n^2+13n+18}{6}$  &  $\frac{2n^3+6n^2+7n+21}{3}$  &   $\frac{4n^3+6n^2+8n+36}{3}$    \\
$4$   & $\frac{n^4+8n^3+23n^2+28n+72}{12}$  & $\frac{n^4+6n^3+14n^2+15n+66}{6}$ &  $\frac{n^4 + 4n^3 +8n^2 +8n+51}{3}$ 
\end{tabular}

\vspace{1em}

\begin{tabular}{l|ll}
\diagbox{$k$}{$j$}  & $3$     & $4$ \\ \hline
$0$   &   $\leq21$      &  $\leq41$   \\
$1$   & $\geq16n-8$ &  $\geq32n-35 $   \\
$2$   & $8n^2-8n+20$  &  $\leq 16n^2-32n+50$   \\
$3$   &   $\frac{8n^3+16n+51}{3}$      &  $\frac{16n^3-24n^2+56n+48}{3}$   \\
$4$   & $\frac{2n^4+4n^3+10n^2+8n+72}{3}$   &   $\frac{4n^4+20n^2+99}{3}$ 
\end{tabular}

    \caption{Some polynomial dimension formulas for $R_{\mathfrak{I}_2(n)}^{(k,j)}$ for $0 \leq k,j \leq 4$. When $j-k \geq 2$, the dimensions eventually stabilize to the listed values as $n \to \infty$. All formulas are obtained by specializing Corollary~\ref{cor:b-f-cardinality-second}.}
    \label{fig:typeI-dims}
\end{figure}

There is interest in establishing formulas that are type-invariant (either for all types, or for at least some types) depending on information on $G$. For example, it is a classical result that $\dim (R_G^{(1,0)}) = |G|$ if and only if $G$ is a finite reflection group. See Figure~\ref{fig:type-independent-dims} for some example dimension formulas, which are known or conjectured to hold for both types $A$ and $I$.

\begin{figure}

\begin{tabular}{l|lllll}
\diagbox{$k$}{$j$} & $0$       & $1$    & $2$  \\ \hline
$0$   &           & $2^r$    & $\binom{2r+1}{r}$ \\
$1$   & $|W|$      & $|\Sigma(W,S)|$ & $2^r|W|$\\
$2$   & $(h+1)^r$ &        &  
\end{tabular}

    \caption{Known or conjectured dimension formulas for $R_{W}^{(k,j)}$ for types $A$ and $I$ together. The entries for $(1,0)$, $(0,1)$, and $(0,2)$ are type invariant for all finite Coxeter groups. The entries for $(2,0)$, $(1,1)$, and $(1,2)$ are known or expected to not necessarily hold for all types, but are known or expected to hold for both types $A$ and $I$.}
    \label{fig:type-independent-dims}
\end{figure}

Haiman \cite{Haiman1994} observed that $\dim(R_W^{(2,0)}) \geq (h+1)^r$, where $h$ denotes the Coxeter number and $r$ denotes the rank for many finite Coxeter groups $W$. This inequality was proven by Gordon \cite{Gordon} for all Weyl groups $W$. Alfano showed that equality holds for type $I$, where $\dim(R_{\mathfrak{I}_2(n)}^{(2,0)}) = (n+1)^2$. When $W = \mathfrak{B}_n$, equality holds for $n = 2, 3$, while for $n \geq 4$, Haiman conjectured and Ajila--Griffeth \cite{AjilaGriffeth} proved that $\dim (R_{\mathfrak{B}_n}^{(2,0)}) > (2n+1)^n$. 
For $W$ an irreducible complex reflection group with order greater than 2 reflections, the lower bound was further improved by Griffeth \cite{Griffeth}.

Zabrocki \cite{Zabrocki2020} observed that $\dim(R_W^{(1,1)})$ is the order of $\Sigma(W,S)$, the Coxeter complex of $W$, for small $n$ for $W = \mathfrak{S}_n$, $W = \mathfrak{B}_n$, $W = \mathfrak{D}_n$, and $W = \mathfrak{I}_2(n)$. 
This was proven in type $A$ by Rhoades and Wilson \cite{RhoadesWilson2023} and in type $B$ by Bhattacharya \cite{Bhattacharya}.
Rhoades and Swanson calculated that this is not true for $W = \mathfrak{F}_4$. Specializing Corollary~\ref{cor:b-f-cardinality-second}, we find that $\dim(R_{\mathfrak{I}_{2}(n)}^{(1,1)}) = 4n+1$, which is the order of the Coxeter complex of $\mathfrak{I}_2(n)$. 

It is not hard to see that $\dim(R_W^{(0,1)}) = 2^r$; for example, this was noted by Machacek and Zabrocki \cite{ZabrockiMachacek} for type $A$. A type-independent proof follows as a corollary from the work of Kim and Rhoades \cite{KimRhoades2022} on $R_W^{(0,2)}$, by setting one set of variables to zero. Specializing Corollary~\ref{cor:b-f-cardinality-second}, we find that $\dim(R_{\mathfrak{I}_{2}(n)}^{(0,1)}) = 4$.

Kim and Rhoades \cite{KimRhoades2022} showed that $\dim(R_W^{(0,2)}) = \binom{2r+1}{r}$, for any irreducible complex reflection group $W$. Since $\mathfrak{I}_2(2)$ is reducible, when $n>2$, this agrees with specializing Corollary~\ref{cor:b-f-cardinality-second}, which says that $\dim(R_{\mathfrak{I}_2(n)}^{(k,j)}) = 10 = \binom{5}{2}$.

In \cite{Lentfer2024}, it was observed that for small $n$, $\dim(R_W^{(1,2)}) = 2^r|W|$ for $W = \mathfrak{S}_n$ or $\mathfrak{B}_n$ and conjectural bases were constructed which give those enumerations. 
Specializing Corollary~\ref{cor:b-f-cardinality-second}, we find that $\dim(R_{\mathfrak{I}_{2}(n)}^{(1,2)}) = 8n = 2^2|\mathfrak{I}_{2}(n)|$. 

Haiman observed that for small $n$, $\dim(R_{n}^{(3,0)}) = 2^n(n+1)^{n-2}$. Note that for $W = \mathfrak{I}_{2}(6) = \mathfrak{G}_2$, the factorization $\dim(R_{\mathfrak{I}_{2}(6)}^{(3,0)}) = 142 = 2 \cdot 71$ does not seem to readily mesh with Haiman's conjecture on $R_{n}^{(3,0)}$ in a type-independent way.

\appendix

\section{Cyclic groups}\label{sec:cyclic}
In this appendix we concisely derive analogous results for the cyclic group. Let $\Z_n$ denote the cyclic group of order $n$. As this is a rank 1 complex reflection group, each set of variables only has one variable, so we dispense with the superscripts, and consider the ring $R_{\Z_n}^{(k,j)} = \C[x_1,\ldots,x_k, \theta_1,\ldots, \theta_j]/\langle\C[x_1,\ldots,x_k, \theta_1,\ldots, \theta_j]_+^{\Z_n}\rangle$. The action of $\Z_n$ on $x_i$ is given by $g \cdot x_i = \zeta_n^g x_i$ and on $\theta_i$ by $g \cdot \theta_i = \zeta_n^g \theta_i$ for $g \in \Z_n = \{0,1,\ldots,n-1\}$, where $\zeta_n = e^{2\pi i / n}$ is a principle $n$th root of unity.
The generators and relations for $\Z_n$ may be written as $\langle a\, |\, a^n = 1\rangle$. 
The irreducible characters of $\Z_n$ are:
\begin{itemize}
    \item $\chi_i$ defined by $\chi_i(a^\ell) = \zeta_n^{i\ell}$ for $i \in \{0,\ldots,n-1\}$.
\end{itemize}

We determine the multigraded character series.

\begin{theorem}\label{thm:cyclic-character}
    Let $\mathbf{q} = (q_1,\ldots, q_k)$ and $\mathbf{u} = (u_1,\ldots, u_j)$. The $\mathbf{q},\mathbf{u}$-graded character series for $R^{(k,j)}_{\Z_n}$ is given by
    \begin{equation}
        \Char(R_{\Z_n}^{(k,j)}; \mathbf{q};\mathbf{u}) = \sum_{i=0}^{n-1} s_{(i)}(\mathbf{q}/\mathbf{u}) \chi_{i}.
    \end{equation}
\end{theorem}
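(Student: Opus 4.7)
The plan is to exploit the especially simple form of the $\Z_n$-action. Since $a\cdot x_i=\zeta_n x_i$ and $a\cdot\theta_i=\zeta_n\theta_i$, any monomial $M$ of total degree $d$ (counting each $\theta_i$ with degree one) is an eigenvector for $a$ with eigenvalue $\zeta_n^d$. Consequently $M$ transforms under the character $\chi_{d \bmod n}$, and the invariant subalgebra $\C[x_1,\ldots,x_k,\theta_1,\ldots,\theta_j]^{\Z_n}$ is exactly the span of the homogeneous components whose total degree is divisible by $n$.

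The key structural step is to identify the ideal $I=\langle\C[x_1,\ldots,x_k,\theta_1,\ldots,\theta_j]_+^{\Z_n}\rangle$ with the sum of all graded components of total degree at least $n$. One containment is immediate: every nonzero invariant of positive degree has total degree a positive multiple of $n$, so everything in $I$ sits in degrees $\geq n$. For the reverse, given any nonzero monomial $M$ of total degree $d\geq n$, I factor $M=\pm M'M''$ where $M'$ is a sub-monomial of $M$ of total degree exactly $n$; such a choice exists because $M$ has distinct fermionic factors and total degree at least $n$, so one can peel off $n$ of its variable factors. Since $M'$ is itself a monomial of total degree $n$, it is $\Z_n$-invariant and so lies in $I$, hence $M\in I$ as well.

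Once this is in hand, $R_{\Z_n}^{(k,j)}$ has a monomial basis consisting of all monomials of total degree $\leq n-1$, and its degree-$d$ homogeneous component is a direct sum of copies of $\chi_d$, one per such monomial. To assemble the multigraded character series, I use
\[
s_{(d)}(\mathbf{q}/\mathbf{u})=\sum_{i=0}^{d}s_{(d-i)}(\mathbf{q})\,s_{(1^i)}(\mathbf{u})=\sum_{i=0}^{d}h_{d-i}(\mathbf{q})\,e_i(\mathbf{u}),
\]
which is exactly the multigraded generating function for monomials of total degree $d$ in $k$ bosonic and $j$ fermionic variables. Summing $s_{(d)}(\mathbf{q}/\mathbf{u})\chi_d$ over $0\leq d\leq n-1$ produces the claimed formula.

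The main (modest) obstacle is the ideal identification, specifically handling the sign bookkeeping when splitting off an invariant sub-monomial from a mixed bosonic/fermionic monomial of degree $\geq n$; the sign is irrelevant for set-membership in the ideal but needs to be acknowledged. After that step, everything reduces to a clean match between the count of monomials of a given multidegree and the corresponding super Schur function, making this cyclic case considerably lighter than the dihedral analysis in Sections~\ref{sec:character-series}--\ref{sec:monomial-basis}.
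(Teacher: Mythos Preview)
Your proof is correct, but it follows a genuinely different route from the paper's. The paper argues by first computing the singly graded character of $R_{\Z_n}^{(1,0)}=\C[x_1]/(x_1^n)$, namely $\sum_{i=0}^{n-1} q^i\chi_i$, and then invoking the diagonal supersymmetry theorem (Theorem~\ref{thm:G-main-theorem}): since the universal coefficients $c_{\lambda,\mu}$ in equation~(\ref{eq:G-theorem}) are indexed by partitions with $\ell(\lambda)\leq 1$, the $(1,0)$ data already pins down every $c_{\lambda,\mu}$, and substituting into the universal series yields the result for arbitrary $(k,j)$. Your argument instead works directly in $R_{\Z_n}^{(k,j)}$: you identify the defining ideal with the span of all monomials of total degree $\geq n$, read off the monomial basis, and then match the multigraded count of degree-$d$ monomials with $s_{(d)}(\mathbf{q}/\mathbf{u})=\sum_i h_{d-i}(\mathbf{q})e_i(\mathbf{u})$. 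In effect you are proving the monomial basis theorem of the appendix first and deducing the character from it, whereas the paper runs the logic in the opposite order (character $\to$ Hilbert series $\to$ dimension $\to$ basis). Your approach is more elementary and self-contained, avoiding any appeal to the $\mathfrak{gl}(k|j)$ machinery; the paper's approach, by contrast, is designed to showcase the universal-coefficient philosophy that drives the dihedral results in the body of the paper.
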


\begin{proof}
It is straightforward to show that $R^{(1,0)}_{\Z_n} = \C[x_1]/(x_1^n)$, with monomial basis $\{1,x_1,x_1^2,\ldots, x_1^{n-1}\}$.
Then the singly graded character of $R^{(1,0)}_{\Z_n}$ is the following graded version of the regular representation:
\begin{equation}\label{eq:cyclic-regular-rep}
    \Char(R_{\Z_n}^{(1,0)}; q) =  \sum_{i=0}^{n-1} q^i\chi_{i}.
\end{equation}

Since each set of variables has only one variable in it, then diagonal supersymmetry (Theorem~\ref{thm:G-main-theorem}) says that for partitions $\lambda$ with $\ell(\lambda) \leq 1$, and indices $\mu$ of irreducible $\Z_n$-characters, there exist nonnegative integer coefficients $c_{\lambda,\mu}$ such that for any $(k,j)$, the multigraded character series of $R_{\Z_n}^{(k,j)}$ is
\begin{equation}\label{eq:cyclic-diagonal-supersymmetry}
    \Char(R_{\Z_n}^{(k,j)}; \mathbf{q};\mathbf{u}) =  \sum_{\lambda \in P(k,j,1)} \sum_{\chi^\mu \in \IrrChar(\Z_n)}c_{\lambda,\mu} s_\lambda(\mathbf{q}/\mathbf{u})\chi^\mu.
\end{equation}
Thus equation~(\ref{eq:cyclic-regular-rep}) determines all coefficients $c_{\lambda,\mu}$ where $\lambda \in P(1,0,1)$, i.e., $\ell(\lambda) \leq 1$. 
Explicitly, for $n\geq 1$, and $\ell(\lambda) \leq 1$ we have:
\begin{itemize}
    \item if $\lambda = (i)$ for $i \in \{0,\ldots,n-1\}$ and $\chi^\mu = \chi_i$, then $c_{\lambda, \mu}=1$;
    \item otherwise $c_{\lambda, \mu}=0$.
\end{itemize}
Thus we have determined $c_{\lambda,\mu}$ for any $\lambda \in P(k,j,1)$.
Hence expanding equation~(\ref{eq:cyclic-diagonal-supersymmetry}) with the coefficients we have determined completes the proof.
\end{proof}

We then conclude the following formula for the Hilbert series. This recovers a result of Bergeron for the purely bosonic case (at $j=0$) \cite[Theorem 2.3]{Bergeron2013}.

\begin{theorem}\label{thm:cyclic-hilbert}
    Let $\mathbf{q} = (q_1,\ldots, q_k)$ and $\mathbf{u} = (u_1,\ldots, u_j)$. The $\mathbf{q},\mathbf{u}$-graded Hilbert series for $R^{(k,j)}_{\Z_n}$ is given by
    \begin{equation}
    \Hilb(R_{\Z_n}^{(k,j)}; \mathbf{q};\mathbf{u})
 = \sum_{i=0}^{n-1} s_{(i)}(\mathbf{q}/\mathbf{u}).
    \end{equation}
\end{theorem}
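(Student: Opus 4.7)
The plan is to deduce Theorem~\ref{thm:cyclic-hilbert} as an immediate consequence of Theorem~\ref{thm:cyclic-character}, in exact parallel with how Theorem~\ref{thm:b-f-hilbert} was obtained from Theorem~\ref{thm:b-f-character} in the dihedral case.

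The key observation is that $\Z_n$ is abelian, so all of its complex irreducible characters $\chi_i$ (for $i \in \{0,1,\ldots,n-1\}$) are $1$-dimensional. Concretely, each $\chi_i$ is defined by $\chi_i(a^\ell) = \zeta_n^{i\ell}$, and the corresponding irreducible representation has dimension $\chi_i(1) = 1$. In general, for any finite group $G$ and any graded $G$-module $M = \bigoplus_d M_d$ with multigraded character series $\Char(M;\mathbf{q};\mathbf{u}) = \sum_\mu f_\mu(\mathbf{q};\mathbf{u}) \chi^\mu$, the multigraded Hilbert series is obtained by replacing each $\chi^\mu$ with its dimension $\chi^\mu(1)$, i.e.\
\[
\Hilb(M;\mathbf{q};\mathbf{u}) = \sum_\mu f_\mu(\mathbf{q};\mathbf{u}) \chi^\mu(1).
\]

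First I would invoke Theorem~\ref{thm:cyclic-character} to write
\[
\Char(R_{\Z_n}^{(k,j)}; \mathbf{q};\mathbf{u}) = \sum_{i=0}^{n-1} s_{(i)}(\mathbf{q}/\mathbf{u}) \chi_{i}.
\]
Then, substituting $\chi_i(1)=1$ for each $i$, every coefficient $s_{(i)}(\mathbf{q}/\mathbf{u})$ is preserved, so
\[
\Hilb(R_{\Z_n}^{(k,j)}; \mathbf{q};\mathbf{u}) = \sum_{i=0}^{n-1} s_{(i)}(\mathbf{q}/\mathbf{u}),
\]
which is exactly the claim.

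There is no real obstacle here: the only content is the one-dimensionality of the irreducible characters of the abelian group $\Z_n$, together with the character-to-Hilbert specialization. The entire proof fits in a sentence or two, making this corollary effectively a restatement of Theorem~\ref{thm:cyclic-character} after passing from characters to dimensions.
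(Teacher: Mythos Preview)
Your proposal is correct and matches the paper's own proof exactly: the paper simply states that the result follows immediately from Theorem~\ref{thm:cyclic-character} since each character of $\Z_n$ is $1$-dimensional. Your added explanation of the character-to-Hilbert specialization is a faithful unpacking of that one-line argument.
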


\begin{proof}
This follows immediately from Theorem~\ref{thm:cyclic-character} since each character is 1-dimensional.
\end{proof}

As a corollary, we obtain the dimension of $R_{\Z_n}^{(k,j)}$.

\begin{corollary}\label{cor:cyclic-dimension}
    The dimension of $R_{\Z_n}^{(k,j)}$ is given by 
    \begin{equation}
        \sum_{i=0}^{n-1}\sum_{\ell=0}^i \binom{k+\ell-1}{\ell} \binom{j}{i-\ell}.
    \end{equation}
\end{corollary}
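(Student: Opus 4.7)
The plan is to obtain the dimension by specializing the multigraded Hilbert series of Theorem~\ref{thm:cyclic-hilbert} at $q_1, \ldots, q_k, u_1, \ldots, u_j = 1$, exactly as was done in the proof of Corollary~\ref{cor:b-f-cardinality-second} for the dihedral case. Since $\Hilb(R_{\Z_n}^{(k,j)}; \mathbf{q};\mathbf{u}) = \sum_{i=0}^{n-1} s_{(i)}(\mathbf{q}/\mathbf{u})$ and the dimension of any finite-dimensional graded module equals its Hilbert series specialized at all variables equal to $1$, the entire computation reduces to evaluating each super Schur function $s_{(i)}(\mathbf{q}/\mathbf{u})$ at this point.

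The key ingredient is the super Schur specialization identity~(\ref{eq:super-schur-specialization}) recalled earlier in the paper, namely
\begin{equation*}
    s_{(m)}(\mathbf{q}/\mathbf{u})\Big|_{q_1, \ldots, q_k, u_1, \ldots, u_j = 1} = \sum_{\ell = 0}^m \binom{k+\ell-1}{\ell}\binom{j}{m-\ell}.
\end{equation*}
Substituting this into the specialization of the Hilbert series gives
\begin{equation*}
    \dim(R_{\Z_n}^{(k,j)}) = \sum_{i=0}^{n-1} \sum_{\ell=0}^i \binom{k+\ell-1}{\ell} \binom{j}{i-\ell},
\end{equation*}
which is exactly the claimed formula. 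There is no real obstacle here, since the formula~(\ref{eq:super-schur-specialization}) was already justified in the proof of Corollary~\ref{cor:b-f-cardinality-second} via the standard principal specializations of the complete homogeneous and elementary symmetric functions combined with the Cauchy-type definition $s_{(m)}(\mathbf{q}/\mathbf{u}) = \sum_{\ell=0}^m s_{(\ell)}(\mathbf{q}) s_{(1^{m-\ell})}(\mathbf{u})$ of super Schur functions. Thus the proof reduces to a single line citing Theorem~\ref{thm:cyclic-hilbert} and equation~(\ref{eq:super-schur-specialization}).
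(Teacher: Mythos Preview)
Your proof is correct and follows exactly the same approach as the paper: specialize the Hilbert series of Theorem~\ref{thm:cyclic-hilbert} at all variables equal to $1$ using the super Schur specialization identity~(\ref{eq:super-schur-specialization}). There is nothing to add.
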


\begin{proof}
    This follows from Theorem~\ref{thm:cyclic-hilbert} specialized using equation~(\ref{eq:super-schur-specialization}).
\end{proof}

Finally, we determine a monomial basis for $R_{\Z_n}^{(k,j)}$.

\begin{theorem}
    A basis $B_{\Z_n}^{(k,j)}$ for the coinvariant ring $R_{\Z_n}^{(k,j)}$ is given by
    \begin{equation}\{x_{1}^{\alpha_1} \cdots x_{k}^{\alpha_k}\theta_{1}^{\beta_1} \cdots \theta_{j}^{\beta_j} \, | \, 0 \leq \alpha_1 + \cdots + \alpha_k + \beta_1 + \cdots + \beta_j \leq n-1\},\end{equation}
    where $\beta_i \in \{0,1\}$ for all $i \in \{1,\ldots,j\}$.
\end{theorem}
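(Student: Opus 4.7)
The plan is to follow the same two-step strategy used for Theorem~\ref{thm:b-f-basis}: first exhibit $B_{\Z_n}^{(k,j)}$ as a spanning set modulo the defining ideal, and then verify that its cardinality matches the dimension computed in Corollary~\ref{cor:cyclic-dimension}, so that it must be a basis.

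For the spanning statement, I would exploit how simple the $\Z_n$-action is: a monomial $M = x_1^{\alpha_1}\cdots x_k^{\alpha_k}\theta_1^{\beta_1}\cdots\theta_j^{\beta_j}$ is sent by $a^g$ to $\zeta_n^{g(\alpha_1+\cdots+\alpha_k+\beta_1+\cdots+\beta_j)}M$, so $M$ is $\Z_n$-invariant precisely when its total degree is divisible by $n$. In particular, every monomial of total degree exactly $n$ (with $\beta_i \in \{0,1\}$, so that the monomial is nonzero) lies in $\C[x_1,\ldots,x_k,\theta_1,\ldots,\theta_j]_+^{\Z_n}$ and hence in the defining ideal. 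Now for any monomial $M$ of total degree $\geq n$, one may split off a sub-monomial of degree exactly $n$ as a factor (taking $n$ variable slots from the product, which is always possible since the remaining monomial still has $\beta_i \in \{0,1\}$), and conclude $M \in \langle \C[x_1,\ldots,x_k,\theta_1,\ldots,\theta_j]_+^{\Z_n}\rangle$. Thus in $R_{\Z_n}^{(k,j)}$ every nonzero monomial may be assumed to have total degree at most $n-1$, giving $B_{\Z_n}^{(k,j)}$ as a spanning set.

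For the counting step, I would stratify $B_{\Z_n}^{(k,j)}$ by total degree $i$ with $0 \leq i \leq n-1$ and then by the part $\ell$ of that degree carried by the bosonic variables. The number of bosonic monomials $x_1^{\alpha_1}\cdots x_k^{\alpha_k}$ of total degree $\ell$ is $\binom{k+\ell-1}{\ell}$, while the number of squarefree fermionic monomials $\theta_1^{\beta_1}\cdots\theta_j^{\beta_j}$ of total degree $i-\ell$ is $\binom{j}{i-\ell}$. Summing over $\ell$ and then over $i$ yields
\begin{equation*}
|B_{\Z_n}^{(k,j)}| = \sum_{i=0}^{n-1}\sum_{\ell=0}^i \binom{k+\ell-1}{\ell}\binom{j}{i-\ell},
\end{equation*}
which matches the formula in Corollary~\ref{cor:cyclic-dimension} exactly. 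Combined with the spanning statement this forces $B_{\Z_n}^{(k,j)}$ to be a basis.

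I do not expect a real obstacle here; the proof is essentially two short observations. The only point requiring mild care is the bookkeeping around the fermionic variables in the reduction step (ensuring that the degree-$n$ sub-monomial one peels off is nonzero and that the quotient is too), but since we only consider monomials with $\beta_i \in \{0,1\}$ this is automatic. Compared with the dihedral case, no straightening relations beyond the trivial ``kill in degree $\geq n$'' rule are needed, which is why the cyclic argument collapses to just a few lines.
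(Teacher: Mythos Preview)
Your proposal is correct and follows essentially the same approach as the paper's proof: show that every monomial of total degree $n$ lies in the invariant ideal, use this to reduce to monomials of degree at most $n-1$ as a spanning set, and then match the cardinality against Corollary~\ref{cor:cyclic-dimension} via a stars-and-bars count stratified by total degree. Your version simply supplies more explicit justification (the eigenvalue computation for why degree-$n$ monomials are invariant, and the factoring argument) where the paper leaves these as ``not difficult to see.''
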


\begin{proof}
    It is not difficult to see that the invariant ideal $\langle\C[x_1,\ldots,x_k, \theta_1,\ldots, \theta_j]_+^{\Z_n}\rangle$ contains all monomials $x_1^{\alpha_1} \cdots x_k^{\alpha_k} \theta_1^{\beta_1} \cdots \theta_j^{\beta_j}$ where $\alpha_1+\cdots+\alpha_k+\beta_1+\cdots+\beta_j = n$ and $\beta_i \in \{0,1\}$ for all $i \in \{1,\ldots,j\}$. 
Furthermore, there are no non-constant invariants of lower total degree.
We can use these as straightening relations to show that the claimed basis is a spanning set.

To show that the spanning set is a basis, it suffices to show that its enumeration matches that in Corollary~\ref{cor:cyclic-dimension}. Let $i = \alpha_1 + \cdots + \alpha_k +\beta_1 + \cdots + \beta_j$ and use a stars and bars argument to complete the proof.
\end{proof}

\section*{Acknowledgements}

The author would like to thank Joseph Alfano, Fran\c{c}ois Bergeron, Sylvie Corteel, Christian Gaetz, Nicolle Gonz\'alez, Mark Haiman, Eric Jankowski, Brendon Rhoades, Vera Serganova, Josh Swanson, and Nolan Wallach for helpful conversations, sharing references, or feedback. 
The author was partially supported by the National Science Foundation Graduate Research Fellowship DGE-2146752.

\bibliographystyle{amsplain}
\bibliography{biblio}

\end{document}